\documentclass[10pt]{article}%{amsart}
\setlength{\oddsidemargin}{0.3cm}
\setlength{\evensidemargin}{0.3cm}
\setlength{\textwidth}{16cm}
\setlength{\textheight}{20cm}

\newcommand\blfootnote[1]{%
	\begingroup
	\renewcommand\thefootnote{}\footnote{#1}%
	\addtocounter{footnote}{-1}%
	\endgroup
}

\usepackage{amsmath, amssymb, cases, color,amsthm}
\usepackage{hyperref}
%\usepackage[capitalise]{cleveref}
%\usepackage{vmargin}
%\setmarginsrb{2cm}{1cm}{2cm}{3cm}{1cm}{1cm}{2cm}{2cm}

%\usepackage{pgfplots}
\usepackage{bbm}
\usepackage{stmaryrd}%for double brackets notation!!!
\usepackage{amsmath, amssymb,amscd, mathtools}
\usepackage{amsfonts}
\usepackage{graphicx}

 \usepackage{upref}
\hypersetup{linkcolor=blue, colorlinks=true,citecolor = red}
%\allowdisplaybreaks
%

\newtheorem{thm}{Theorem}[section]
\newtheorem{prop}[thm]{Proposition}
\newtheorem{lem}[thm]{Lemma}

\newtheorem{rmk}[thm]{Remark}

\theoremstyle{definition}
\newtheorem{definition}[thm]{Definition}

\theoremstyle{remark}
\numberwithin{equation}{section}

 %

%%%%%%%%%%%%%%%%%%%%%%%%%%%%%%%%%%%%%%%%%%%%%%%

\newcommand{\BE}{\begin{equation}}
	% Changed by Olof
\newcommand{\EEQ}{\end{equation}}
\newcommand{\rfb}[1]{\mbox{\rm
		(\ref{#1})}\ifx\undefined\stillediting\else:\fbox{$#1$}\fi}

\newfont{\roma}{cmr10 scaled 1200}

\newcommand{\pline}  {{\mathbb P}}
\newcommand{\rline}  {{\mathbb R}}

\newcommand{\tline}  {{\mathbb T}}

\newcommand{\dd}  {{\rm d}\hbox{\hskip 0.5pt}}
\renewcommand{\leq} {\leqslant}
\renewcommand{\geq} {\geqslant}
\newcommand{\mm}    {{\hbox{\hskip 0.5pt}}}
\newcommand{\m}     {{\hbox{\hskip 1pt}}}

\newcommand{\bluff} {{\hbox{\raise 15pt \hbox{\mm}}}}
\newcommand{\sbluff}{{\hbox{\raise 10pt \hbox{\mm}}}}
\newcommand{\Om}    {{\Omega}}
\renewcommand{\div} {{\rm div\,}}
\newcommand{\eps}    {{\varepsilon}}
\newcommand{\ue}     {u_{\varepsilon}}

\newcommand{\lep}    {l_\varepsilon}
\newcommand{\ome}    {\omega_\varepsilon}
\newcommand{\dx}     {\mathrm{d}x} 
\newcommand{\ds}     {\mathrm{d}s} 
\newcommand{\dy}     {\mathrm{d}y} 

\def\toep{\stackrel{\eps\to0}{\longrightarrow}}
\def\toepk{\stackrel{\eps_k\to0}{\longrightarrow}}

\newcommand{\FORALL} {{\hbox{$\hskip 11mm \forall \;$}}}
                 % OS

%

               % OS

          % OS
\newcommand{\prt}      {{\partial}}

\def\loc{_{\text{loc}}}
\DeclarePairedDelimiter{\norm}{\lVert}{\rVert}

\newcommand{\Ascr} {\mathcal{A}}

\newcommand{\Dscr} {\mathcal{D}}
\newcommand{\Fscr}{\mathcal{F}}

\newcommand{\Kscr} {\mathcal{K}}

\newcommand{\Oscr} {\mathcal{O}}

\newcommand{\Sscr} {\mathcal{S}}

%

%

% The distance.This command allows you to make a bunch of comments
%in the margin.
%It is handy when working with coauthors over email

%\subjclass[2020]{35B40, 76D05, 35Q70, 35Q35, 35Q30}
%\keywords{}
%%%%%%%%%%%%%%%%%%%%%%%%%%%%%%%%%%%%
%%%%%%%%%%%%%%%%%%%%%%%%%%%%%%%%%%%%
%%%%%%%%%%%%%%%%%%%%%%%%%%%%%%%%%%%%
%%%%%%%%%%%%%%%%%%%%%%%%%%%%%%%%%%%%

\title{The vanishing limit of a rigid body in three-dimensional viscous incompressible fluid \blfootnote{The authors would like to thank Sylvain Ervedoza (Universit\'e de Bordeaux) for helpful discussions and also Tak\'eo Takahashi (Inria Nancy) for suggestions. P. Su is partially supported by ERC-CZ grant LL2105 and the University Centre UNCE/SCI/023 of Charles University.}}
\usepackage{authblk}
\author{Jiao He$^1$}

\author{Pei Su$^2$\thanks{corresponding author}}

\affil{\footnotesize $^1$ Laboratoire de Mathématiques d’Orsay, Université Paris-Saclay \\
307 Rue Michel Magat, Orsay 91400, France \\
ORCID : 0000-0001-8951-4566
\\ jiao.he@universite-paris-saclay.fr}

\affil{\footnotesize $^2$ Department of Mathematical Analysis, Faculty of Mathematics and Physics, Charles University \\
Sokolovská 83, 186 75 Praha 8, Czech Republic\\
ORCID : 0000-0002-5817-4260\\
supeiamss@gmail.com}

\begin{document}
 \date{\today}
%\address{Laboratoire de Mathématiques d’Orsay, Université Paris-Saclay, Orsay 91400, France}
%\email{jiao.he@universite-paris-saclay.fr}
%\email{ORCID : 0000-0001-8951-4566}
%\urladdr{https://jiao-math.github.io/jiao.he/}

%\author{Pei Su}
%\address{Department of Mathematical Analysis,Faculty of Mathematics and Physics, Charles University, Sokolovská 83, 186 75 Praha 8, Czech Republic}
%\email{peisu@karlin.mff.cuni.cz} 
%\urladdr{https://sites.google.com/view/peisu}

%\thanks{k}

\maketitle
\begin{scriptsize}
\abstract{	
We consider the evolution of a small rigid body in an incompressible viscous fluid filling the whole space $\rline^3$. When the small rigid body  shrinks to a \textit{``massless" point} in the sense that its density is constant, we prove that the solution of the fluid-rigid body system converges to a solution of the Navier-Stokes equations in the full space. Based on some $L^p-L^q$ estimates of the fluid-structure semigroup and a fixed point argument, we obtain a uniform estimate of velocity of the rigid body. This allows us to construct admissible test functions  which plays a key role in the procedure of passing to the limit. }
\end{scriptsize}

\

%\maketitle
{\bf Key words.} Fluid-rigid body, $L^p-L^q$ estimate, fluid-structure semigroup, 3D viscous incompressible fluid, singular limit.

%{\bf AMS subject classifications.} 93B03, 93C20, 93D15, 76B15, 35Q35

%\tableofcontents

%%%%%%%%%%++++++++++%%%%%%%%%%++++++++++%%%%%%%%%%++++++++++%%%%%%%%%%++
\section{Introduction}\label{sec_intro}
In this work, we investigate the interaction between a viscous 
incompressible fluid and a small rigid body, which is completely 
immersed in the fluid. The fluid we consider here is governed 
by the three dimensional Navier-Stokes equations. The motion
of the rigid body obeys the conservation of linear and angular
momentum, in particular, without taking into account the influence 
of the gravity. Here we are interested in describing the dynamics of 
the fluid-body system as the rigid body shrinks to a point. 
Now let us precisely state the problem. 

\subsection{Statement of the problem}
We assume that the rigid body is a ball of radius $\eps>0$ and 
centered at $h_\eps(t)\in\rline^3$, occupying the area
$$ \Sscr_\eps(t)=\overline{B(h_\eps(t), \eps)}$$
in $\rline^3$. The fluid domain, denoted by $\Fscr_\eps(t)$, is 
the exterior part of $\Sscr_\eps(t)$, i.e. 
$$\Fscr_\eps(t)=\rline^3\setminus \Sscr_\eps(t).$$
We denote by $n(t,x)$ the unit normal vector field of 
$\prt\Fscr_\eps(t)$, which directs toward the interior of 
$\Sscr_\eps(t)$ and is independent of the size of the body $\eps$. 
The fluid is supposed to be homogeneous with 
density $\rho_\eps^f=1$ and constant viscosity $\nu>0$.
The velocity and the pressure in the fluid 
are denoted by $u_\eps(t,x)$ and $p_\eps(t,x)$, respectively.
With the above notation, the system describing
the motion of the rigid ball in the fluid, for all $t>0$, reads
\begin{itemize}
\item Fluid equations:
\begin{flalign}{\label{ns-ob1}}
\begin{split}
\partial_t u_{\eps} +(u_{\eps} \cdot \nabla )
u_{\eps}- \nu \Delta u_{\eps} + \nabla p_{\eps} =0 \FORALL  y 
\in \Fscr_{\eps}(t), \\
\div u_{\eps} = 0   \FORALL  y \in \Fscr_{\eps}(t),
\end{split}
\end{flalign}
\item Rigid body equations:
\begin{align}
m_{\eps} \ddot h_{\eps}(t) = - \int_{\partial \Sscr_{\eps}(t)} 
\sigma(u_\eps,p_\eps) n\m \dd s, \;\;\;{\label{solid1}} \\
J_{\eps}\dot \omega_{\eps}(t) = - \int_{\partial \Sscr_{\eps}(t)} 
(y-h_{\eps}) \times (\sigma (u_\eps,p_\eps) n)\m \dd s, 
\;\;\; {\label{solid2}}
\end{align}
\item Boundary conditions:
\begin{flalign}\label{boundarycon1}
\begin{split}
u_{\eps}(t,\m y)= \dot h_{\eps}(t)+ \omega_{\eps}(t) \times 
\left(y-h_{\eps}(t)\right)  \FORALL y \in \partial \Sscr_{\eps}(t).
%\lim_{|x|\rightarrow \infty} u_{\eps}(t,x) =0 \;\;\;.
\end{split}
\end{flalign}
\item Initial conditions:
\begin{equation}{\label{initialcon1}}
\begin{aligned}
u_{\eps}(0,\m y) = u_{\eps}^0(y) \FORALL y\in\Fscr_\eps(0),\\
\quad
h_{\eps} (0)=0, \quad 
\dot h_{\eps} (0) = \lep^0, \quad
\omega_\eps(0) = \ome^0.
\end{aligned}
\end{equation}
\end{itemize}
Here we assume that the initial position of the center 
of mass for the rigid body is at the origin. In the above equations, $\omega_\eps(t)$ represents the angular 
velocity of the body. Let us denote by $\rho^{s}_\eps$ the density of the rigid body, which is assumed to be constant, i.e., $\rho^{s}_\eps = \rho$. 
The constant $m_\eps$ and the matrix 
$J_\eps$ stand for the mass and the inertia tensor of the 
rigid ball and can be expressed as below, respectively: 
\begin{equation}\label{formulamass}
\begin{aligned}
m_\eps&=\int_{\Sscr_\eps (t)}\rho\m \dd y,\\
 (J_\eps)_{i,j}
=\int_{\Sscr_\eps(t)}\rho(I_3 |y-h_\eps(t)|^2-&(y_i-h_\eps(t))(y_j-h_\eps(t)))\m\dd y
\quad i,j= 1,2,3.
\end{aligned}
\end{equation}
The Cauchy stress tensor of the fluid 
$\sigma (u_\eps, p_\eps)$ is given by 
$$
\sigma (u_\eps, p_\eps) = 2 \nu D(u_\eps) - p_\eps I_{3},
$$ 
where $I_{3}$ is identity matrix of order $3$ and $D(u_\eps)$ 
is the deformation tensor
\begin{equation}\label{symmetric}
D(u_\eps):=\frac{1}{2}\left(\nabla u_\eps + (\nabla u_\eps)^
\intercal\right).
\end{equation}

We introduce here some notation which is used throughout this 
paper. For any smooth open set $\Oscr\subset\rline^3$, the notation 
$L^p(\Oscr)$ and $W^{s,p}(\Oscr)$, for every $s\in\rline$ and 
$1\leq p<\infty$, represents the standard Lebesgue and 
Sobolev-Slobodeckij spaces,  respectively. For $p=2$, as usual we use the notation $H^s(\Oscr)$. Moreover, we define the divergence 
free spaces as follows:
\begin{equation}\label{spacefree}
\begin{aligned}
L_\sigma^p(\Oscr)=\left\{ f\in L^p(\Oscr)\m\m \left|\m\m \div f=0 
\,\,\, \text{in}\,\,\, \Oscr\right.\right\} ,\\
H_\sigma^s(\Oscr) =\left\{f\in H^{s}(\Oscr)\m\m\left|\m\m 
\div f=0 \,\,\, \text{in}\,\,\, \Oscr \right.\right\}.
\end{aligned}
\end{equation}
For the sake of simplicity, we denote by $\lVert~\cdot~
\rVert_{p,\Oscr}$ and $|\cdot|$ the norm in $L^p(\Oscr)$ and in 
$\rline^3$, respectively. For a matrix $M$, we denote by 
$M^\intercal$ the transpose of $M$. Finally, if a function $f$ 
only depends on time $t$, we denote by $\dot f$ its derivative 
with respect to $t$; and if $f$ depends on both time and space, 
we use $\prt_t f$ for its partial time-derivative.

We assume that the initial data in \rfb{initialcon1} satisfy 
\begin{equation}\label{inicond}
\begin{aligned}
u_\eps^0\in L^2(\Fscr_\eps(0)),\quad \div u_\eps^0=0\quad \text{in}
\quad \Fscr_\eps(0),\\
u_\eps^0\cdot n=\left( l_\eps^0+\omega_\eps^0\times y \right)
\cdot n \quad \text{on}\quad \prt\Sscr_\eps(0).
\end{aligned}
\end{equation}

\begin{rmk}[Energy estimates]\label{remark_energy}
{\rm 
Taking the inner product of \eqref{ns-ob1} with $u_\eps$, integrating the result by parts and using the equations \eqref{solid1} and \eqref{solid2}, we get the following energy estimate (see, for instance, Gunzburger et al. \cite{gunzburger2000global}):
\begin{multline*}\label{energinequality}
\|u_\eps (t)\|_{L^2(\mathcal{F}_{\eps}(t))}^2 + m_\eps |\dot{h}_\eps (t)|^2 + \left(J_\eps\m \omega_{\eps}(t) \right) \cdot \omega_{\eps}(t) + 4\m\nu \int_0^t \|D(u_\eps)\|_{L^2(\mathcal{F}_{\eps}(t))}^2 \\
\leq \|u_\eps^0\|_{L^2(\mathcal{F}_{\eps}(0))}^2 + m_\eps  |\m\lep^0|^2 + \left(J_\eps  \omega_{\eps}^0 \right) \cdot \omega_{\eps}^0.
\end{multline*}
Note that the tensor of inertia $J_\eps$ is positive. 
We observe that if the initial data $\ue^0$, $l_\eps^0$ and $\omega_{\eps}^0$ are bounded, we then have the boundedness of $u_\eps (t)$ and $m_\eps |\dot{h}_\eps (t)|^2$. Nevertheless, the energy estimate above is not enough to obtain a uniform estimate of the velocity of the rigid body i.e. $\dot{h}_\eps (t)$ when the mass of the rigid body tends to zero.}
\end{rmk}

\subsection{Weak solutions and main results}
In order to introduce the {\em Leray-Hopf weak solution} and state the existence of solutions of the system \eqref{ns-ob1}-\eqref{initialcon1}, let us first extend the velocity field, still denoted by $\ue$, as follows:
\begin{equation*}%\label{extension}
\ue(t,y)=\mathbbm{1}_{\Fscr_\eps(t)}u_\eps(t,y)+\mathbbm{1}_{\Sscr_\eps(t)}\left(\dot h_{\eps}(t)+\omega_{\eps}(t)\times\left(y-h_{\eps}(t) \right)\right) \FORALL y\in\rline^3,
\end{equation*}
where $\mathbbm{1}_A$ is the indicator function of the set 
$A$.
Clearly, with conditions \eqref{inicond}, we know that 
\begin{equation*}
\ue^0 \in L^{2}(\rline^{3}), \quad \div \ue^0 = 0\;\; \text{in} \;\; \rline^{3}.\\
\end{equation*}
We also define the global density of the fluid-rigid body system in $\rline^3$ as follows:
\begin{equation}\label{densityt}
\rho_\eps(t,y)=\mathbbm{1}_{\Fscr_\eps(t)}(y) +\rho 
\mathbbm{1}_{\Sscr_\eps(t)}(y)\FORALL t>0, \,\,\, y\in\rline^3.
\end{equation}

Now we introduce the definition of a weak solution of the system \eqref{ns-ob1}-\eqref{initialcon1}. 

\begin{definition}[Leray-Hopf weak solution]\label{defu}
We call the triplet $(u_\eps, h_\eps, \ome)$ a global Leray-Hopf weak 
solution of the system \eqref{ns-ob1}-\eqref{initialcon1}, if for 
every $T>0$ we have
$$u_\eps\in L^\infty(0, T; L^2(\rline^3))\cap L^2(0, T; 
H^1(\rline^3)),$$
$$u_\eps(t,y)=\dot h_\eps(t)+\ome(t) \times (y-h_\eps(t)) 
\FORALL t\in[0,T),\,\,\, y\in \Sscr_\eps(t),$$
and $\ue$ verifies the equation in the following sense:
\begin{multline*}%\label{equlity_energy}
-\int_{0}^{T} \int_{\rline^3} \rho_{\eps} \ue \cdot \left(\prt_s
\m\varphi_{\eps} + (\ue \cdot \nabla) \m\varphi_{\eps} 
\right) \, \dy \m\ds
+ 2 \nu \int_{0}^{T} \int_{\rline^{3}} D (\ue) : D (\varphi_{\eps})  
\,  \dy \m\ds 
= \int_{\rline^{3}} \rho_{\eps}\m \ue^0(y) \cdot \varphi_{\eps}(0, y) 
\,  \dy. 
\end{multline*}
for every $\varphi_\eps\in C_c^1([0,T); H_\sigma^1(\rline^3))$ 
such that $D(\varphi_\eps)=0$ in $\Sscr_\eps(t)$.
\end{definition}
There is a large literature about the existence of weak solutions of fluid-rigid body system in the past few decades. The first result about such problem has been established by Serre in \cite{serre1987chute}, where the author proved the global in time existence of Leray weak solution. 
Here we do not mention more about the existence issue, but state the following result for self-contained reason. For more details, please refer to \cite{conca2000existence, desjardins2000weak, gunzburger2000global, serre1987chute}.
\begin{thm}[Existence result]\label{Existence}
Let $\ue^0\in L^2(\rline^3)$ be divergence free such that $D(\ue^0)=0$ in $\mathcal{S}_\eps(0)$. 
Then there exists at least one global weak solution $(\ue, h_{\eps}, \omega_{\eps})$ for the initial-boundary value problem \eqref{ns-ob1}-\eqref{initialcon1} in the sense of Definition \ref{defu}. Moreover, $\ue$ satisfies the following energy estimate:
\begin{align*}{\label{energyineq}}
\int_{\rline^{3}} \rho_{\eps} |\ue(t)|^{2}  + 4 \nu \int_{0}^{t} \int_{\rline^{3}} |D (\ue)|^{2} \leq \int_{\rline^{3}} \rho_{\eps}(0) |\ue^0|^{2} \FORALL t>0.
\end{align*}
\end{thm}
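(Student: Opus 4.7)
The plan is to follow the now-classical route for weak solutions to fluid-rigid body systems, as developed by Desjardins-Esteban, Conca-San Martín-Tucsnak, Gunzburger et al.\ and San Martín-Starovoitov-Tucsnak. Since the formulation in Definition \ref{defu} is written globally on $\rline^3$ with the density $\rho_\eps$ carrying the geometric information, the most direct construction is a penalization scheme: for a small parameter $\eta>0$ one approximates the rigid body by a very viscous fluid, replacing the constraint $D(\ue)=0$ on $\Sscr_\eps(t)$ by an $L^2$ penalty of the form $\tfrac1\eta\int_{\Sscr_\eps(t)}|D(u^\eta)|^2$ (equivalently, $D(u^\eta)$ on $\Sscr_\eps(t)$ is driven to zero as $\eta\to 0$). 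This approximate problem is a standard variable-density Navier-Stokes system in $\rline^3$, whose solvability on any time interval $[0,T]$ follows from a Galerkin procedure built on a basis of $H_\sigma^1(\rline^3)$.

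The second step produces uniform estimates. Testing the penalized equation by $u^\eta$ and using that the characteristic function of $\Sscr_\eps(t)$ is transported by a divergence-free velocity field yields the energy identity, from which we read off uniform bounds
\[
u^\eta\in L^\infty(0,T;L^2(\rline^3))\cap L^2(0,T;H^1(\rline^3)),
\]
together with a bound on the penalty term that forces $D(u^\eta)\to 0$ in $L^2(\Sscr_\eps(t))$. In parallel, the approximate center of mass $h^\eta$ and angular velocity $\omega^\eta$ are recovered via the $L^2$-projection onto rigid motions and inherit $W^{1,\infty}$ bounds, so that by Arzelà-Ascoli $h^\eta\to h_\eps$ uniformly on $[0,T]$, which in turn yields pointwise convergence of $\mathbbm{1}_{\Sscr_\eps^\eta(t)}$.

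The third step is compactness and passage to the limit. A bound on $\prt_t u^\eta$ in a negative Sobolev space is obtained by testing the equation against admissible $\varphi\in C_c^1([0,T);H_\sigma^1(\rline^3))$; combined with the $H^1$ bound, Aubin-Lions gives strong convergence of $u^\eta$ in $L^2(0,T;L^2_{\loc}(\rline^3))$, enough to pass to the limit in the nonlinear term $(u^\eta\cdot\nabla)u^\eta$. The penalty bound, together with Korn's inequality on the ball, implies that the limit $\ue$ coincides with a rigid motion on $\Sscr_\eps(t)$, so the compatibility condition in Definition \ref{defu} holds and \eqref{equlity_energy} is obtained by taking $\eta\to 0$ in the weak formulation of the penalized problem. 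The energy inequality \eqref{energyineq} then follows from lower semicontinuity of the $L^2$ norms.

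The step I expect to be the main technical obstacle is the handling of the moving rigid constraint when passing to the limit. Admissible test functions must satisfy $D(\varphi)=0$ on $\Sscr_\eps(t)$, whose geometry depends on the unknown $h_\eps$; the standard remedy is to construct test functions on a fixed reference configuration and transport them by the rigid motion associated with $h_\eps$, using the uniform Lipschitz regularity of $h^\eta$ to justify all convergences. Once this is done the rest is bookkeeping, and we refer to \cite{conca2000existence, desjardins2000weak, gunzburger2000global, serre1987chute} for the detailed arguments.
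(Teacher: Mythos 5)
The paper gives no proof of Theorem~\ref{Existence} at all: it states the result ``for self-contained reasons'' and defers entirely to the references \cite{conca2000existence, desjardins2000weak, gunzburger2000global, serre1987chute}, so there is no in-text argument to compare your proposal against. Your penalization-plus-Galerkin outline is a legitimate and well-trodden route to exactly this statement, and it captures the essential ingredients: a penalized variable-density approximation, the transport of $\mathbbm{1}_{\Sscr_\eps(t)}$ by a divergence-free field, uniform energy bounds, Aubin--Lions compactness, and the identification of the limit velocity as a rigid motion on $\Sscr_\eps(t)$ via Korn's inequality. You are also right to flag the moving-domain test functions as the main technical point; that is indeed the delicate part of the limit passage. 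It is worth noting, though, that your sketch is closest in spirit to the penalization arguments of Desjardins--Esteban and San Mart\'in--Starovoitov--Tucsnak, whereas the other references cited in the paper take somewhat different routes (Serre via a transport/fixed-point scheme, Conca--San Mart\'in--Tucsnak via elliptic regularization, Gunzburger et al.\ via a Galerkin basis adapted to the constraint). All of these yield the same energy inequality \eqref{energyineq} by lower semicontinuity. One small caution: you assert $W^{1,\infty}$ bounds and uniform Lipschitz regularity for $h^\eta$; from the energy estimate alone one gets $\dot h^\eta$ bounded in $L^\infty(0,T)$ only because $m_\eps>0$ is fixed here, and this is exactly the step that degenerates as $\eps\to 0$ (cf.\ Remark~\ref{remark_energy}) -- it is fine for proving Theorem~\ref{Existence} at fixed $\eps$, but be aware it does not transfer to the shrinking-body analysis.
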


Our purpose in this article is to investigate the asymptotic limit of the system \eqref{ns-ob1}-\eqref{initialcon1} when the size of the rigid body goes to $0$. Before stating our main result, let us first review some related references. The vanishing limit of small obstacle problem has been studied by a number of work, in both incompressible and compressible cases, with various assumptions on the fluid and, in particular, on the parameters of the body. 

When the small rigid body is fixed inside the fluid, for instance, Iftimie et al. \cite{iftimie2006two} proved the limit of $\ue$ satisfies the 2D Navier-Stokes equations, with assumptions on the initial vorticity and the circulation of the velocity around the body. Later on, Iftimie and Kelliher \cite{iftimie2009remarks} considered the same problem in 3D. Moreover, the influence of the precise shape of the small obstacle in the fluid is of particular interest. Lacave studied the case of a thin obstacle tending to a curve in \cite{lacave2009two} for 2D and in \cite{lacave2015three} for 3D. More recently,  Chipot et al. considered in \cite{chipot2020limits}  the vanishing limit problem in a 2D periodic flow where the domain of the fluid is punctured by a star-shaped open connected set.

In the case that the obstacle can move under the influence of the fluid, the asymptotic behaviour of the fluid-rigid body system becomes more complicated. Lacave and Takahashi in  \cite{lacave2017small} considered a small moving disk into a viscous incompressible fluid in 2D with 
the assumption that the mass of the obstacle tends to zero along with its size and some small initial data. When the density of the body goes to infinity as its size goes to zero, the convergence has been proved by He and Iftimie \cite{he2019small} for 2D and \cite{he2021small} for 3D only using energy estimates and without small initial data. More recently,  
Feireisl et al. \cite{feireisl2022motion3} extended the result of \cite{lacave2017small} by removing the restriction on the initial data and the shape of the rigid body for both 2D and 3D. In particular, their assumptions allow the density of the body to be asymptotically small. Moreover, Bravin and Ne{\v{c}}asov{\'a} 
\cite{bravin2022velocity} obtained some estimates of the velocity of the rigid body under the assumptions $m_\eps \eps^{-1/2} \to +\infty$ for 3D and $m_\eps \geq C >0$ for 2D, which seems helpful for vanishing rigid body problem.
In addition, there are some recent work that dealt with the similar problem in viscous compressible fluid, please refer to \cite{bravin2020vanishing, feireisl2022motion1, feireisl2022motion2} for more details.

%Based on the similar assumption, there are several recent work done by Feireisl et al. \cite{feireisl2022motion1, feireisl2022motion2} on the body interacting with compressible fluid, and also by Bravin and Ne{\v{c}}asov{\'a} \cite{bravin2020vanishing} and \cite{bravin2022velocity} in compressible and incompressible fluid, respectively. Contrary to this "heavy" mass assumption, Feireisl et al. dealt with in \cite{feireisl2022motion3} the similar problem on a non-homogeneous incompressible fluid with the assumptions allowing the density of the body to be asymptotically small. The above "heavy" and "light" mass assumptions, used to have access to the justification of the limit from the fluid-body system to pure fluid system, are obviously not physically nature. 

Intuitively, the vanishing process of the body should be induced directly by the decreasing of the volume of the body, rather than the variation of the density. With constant density, the technique employed in \cite{lacave2017small} is the so-called $L^p-L^q$ estimate of the {\em fluid-structure semigroup} established in Ervedoza et al. \cite{ervedoza2014long}, which gives a uniform estimate of the obstacle velocity.  
Here we are interested in studying the corresponding problem with the similar setting in 3D, based on the recent long time behaviour work by Ervedoza et al. \cite{ervedoza2022large}. We finally remark that the long-time behavior of solutions to a fluid-rigid disk system in 2D case has been also studied in Ferriere and Hillairet \cite{ferriere2022unbounded} with somewhat different setting.

Still using the notation introduced at the beginning of this section, our main theorem can be stated as follows:

\begin{thm}\label{main-thm}
There exists a constant $\gamma_0>0$ such that if 
\begin{itemize}
 \item $\ue^0(x)\in L^2(\mathcal{F}_\eps (0))\cap L^3(\Fscr_\eps (0))$ is divergence free and satisfies $u_\eps^0\cdot n=\left( l_\eps^0+\omega_\eps^0\times y \right)
\cdot n$ on $\partial \mathcal{S}_\eps(0)$;
\item $\ue^0(x)$ converges weakly in $L^2(\rline^3)$ to some $u_0(x)$;
\item we have the following smallness of the initial data
\begin{equation*}\label{smalllt}
\norm{\ue^0(x)}_{L^2(\mathcal{F}_\eps(0)) \cap L^3(\Fscr_\eps(0))},\m |l_\eps^0|,\m |\omega^0_\eps|\leq \gamma_0,
\end{equation*}
\end{itemize}
then, for any $T >0$, we have
$$u_\eps \rightharpoonup u \quad \text{weak-$\ast$ in} \quad L^\infty(0, T;L^2(\rline^3))\cap L^2(0, T;H^1(\rline^3)),$$
where $u$ is a weak solution of the Navier-Stokes equations in $\rline^3$ with initial data $u_0$, i.e., $u$ satisfies
\begin{multline}\label{energy-eq-u}
-\int_{0}^{T} \int_{\rline^3} u \cdot \left(\prt_s
\m\varphi  + (u \cdot \nabla) \m\varphi
\right) \, \dy \m\ds
+ 2 \nu \int_{0}^{T} \int_{\rline^{3}} D (u) : D (\varphi)  
\,  \dy \m\ds 
= \int_{\rline^{3}}\m u^0(y) \cdot \varphi(0, y) 
\,  \dy, 
\end{multline}
for any $\varphi \in C_c^1 ([0, T); H^1_\sigma(\rline^3))$.
\end{thm}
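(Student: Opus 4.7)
The plan is to combine the weak compactness coming from the energy inequality with the $L^p$--$L^q$ semigroup estimates recalled from Ervedoza et al.~\cite{ervedoza2022large} to construct admissible test functions, and then pass to the limit in the weak formulation \eqref{equlity_energy}. From Theorem \ref{Existence} and the smallness assumption \eqref{smalllt}, the family $\{\ue\}$ is uniformly bounded in $L^\infty(0,T;L^2(\rline^3))\cap L^2(0,T;H^1(\rline^3))$, so up to extracting a subsequence, $\ue \rightharpoonup u$ weak-$*$ in this space. The task is then to identify $u$ as a Leray--Hopf weak solution of the Navier--Stokes equations on $\rline^3$.

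The central obstacle is that the energy inequality \eqref{energinequality} does not control $\dot h_\eps$ uniformly, since $m_\eps = \tfrac{4}{3}\pi\rho\eps^3 \to 0$. To overcome this, I would rewrite the system in the body-fixed frame via the change of variable $x = y - h_\eps(t)$, obtaining a problem posed on a time-independent exterior domain, and split the resulting equations into a linear fluid-structure semigroup part plus a nonlinear perturbation coming from the convection and the transport of the frame. Combining the $L^p$--$L^q$ decay estimates of the fluid-structure semigroup from \cite{ervedoza2022large} with a Banach fixed point argument in a Kato-type space tailored to the $L^2\cap L^3$ regularity of $\ue^0$, and exploiting the smallness \eqref{smalllt}, one obtains an $\eps$-uniform bound of the form
\begin{equation*}
|\dot h_\eps(t)| \leq C\gamma_0 (1+t)^{-\alpha}, \qquad |h_\eps(t)| \leq C\gamma_0 \quad \forall\, t\in[0,T],
\end{equation*}
for some $\alpha>0$ independent of $\eps$. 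In particular $h_\eps$ stays in a fixed compact subset of $\rline^3$, which is what makes the subsequent arguments go through. This is by far the hardest step.

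Equipped with these bounds, I build admissible test functions as follows. Given $\varphi \in C_c^1([0,T); H^1_\sigma(\rline^3))$, I replace $\varphi$ near $\Sscr_\eps(t)$ by the constant value $\varphi(t,h_\eps(t))$ on a slightly enlarged ball, and use a Bogovski\u{\i}-type correction inside the thin annular transition region to restore the divergence-free constraint. This produces $\varphi_\eps \in C_c^1([0,T); H^1_\sigma(\rline^3))$ with $D(\varphi_\eps)=0$ in $\Sscr_\eps(t)$, and such that
\begin{equation*}
\varphi_\eps \to \varphi \text{ strongly in } C^0([0,T];H^1(\rline^3)), \qquad \prt_t \varphi_\eps \to \prt_t\varphi \text{ in } L^2(0,T;L^2(\rline^3)).
\end{equation*}
The volume of the correction region is $O(\eps^3)$, which compensates the growth of the Bogovski\u{\i} operator, and the uniform control on $h_\eps$ ensures that the supports remain bounded.

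Finally, I pass to the limit in each term of \eqref{equlity_energy}. The viscous term converges by weak convergence of $D(\ue)$ combined with the strong convergence of $D(\varphi_\eps)$. For the convective term, an $\eps$-uniform bound on $\prt_t \ue$ in a suitable negative Sobolev space, obtained directly from \eqref{ns-ob1}--\eqref{solid1} together with the velocity estimate above, allows the Aubin--Lions lemma to yield strong convergence $\ue \to u$ in $L^2(0,T;L^2_{\text{loc}}(\rline^3))$, which is enough since $\varphi_\eps$ has compact spatial support. The remaining $\rho_\eps$-weighted terms differ from their constant-density counterparts only on $\Sscr_\eps(t)$, whose measure is $O(\eps^3)$, so those contributions vanish in the limit; the initial-data term converges by the assumed weak convergence of $\ue^0$. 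Collecting these limits identifies $u$ with the weak formulation \eqref{energy-eq-u} for every admissible $\varphi$, completing the proof.
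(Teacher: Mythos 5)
Your overall route matches the paper's: body-fixed change of variables, $\eps$-uniform $L^p$--$L^q$ semigroup estimates, a Kato-type fixed-point argument to control the solid velocity, test-function construction via a cut-off plus a Bogovski\u{\i}-type correction on an annulus, and a compactness argument for the nonlinear term. However, there are three concrete places where your sketch goes wrong or glosses over a real difficulty.

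\textbf{The decay rate of $\dot h_\eps$ and the time regularity of $\varphi_\eps$.} The fixed point in the Kato space gives $\sup_{t>0}t^{3/4}|\dot h_\eps(t)|\leq\mu_0$, i.e.\ a bound of the form $|\dot h_\eps(t)|\lesssim t^{-3/4}$, which is \emph{singular} at $t=0$ (the initial data is only $L^2\cap L^3$, not smooth). Your claimed bound $|\dot h_\eps(t)|\leq C\gamma_0(1+t)^{-\alpha}$, bounded at $t=0$, is too strong. As a consequence, $\dot h_\eps$ and $\dot h$ belong to $L^q(0,T)$ only for $q<4/3$, and the modified test function $\varphi_\eps$ is $W^{1,q}$ in time (not $C^1$), with $\partial_t\varphi_\eps\to\partial_t\varphi$ only in $L^q(0,T;L^2)$ for $q<4/3$. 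Your claim of strong convergence in $C^0([0,T];H^1)$ and of $\partial_t\varphi_\eps\to\partial_t\varphi$ in $L^2(0,T;L^2)$ cannot hold; the integrability exponent must be tracked carefully because it controls the pairing with $\ue\in L^\infty(0,T;L^2)$ in the time-derivative term.

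\textbf{The $\rho_\eps$ terms.} You propose to dispense with the $\rho_\eps$ weights by a measure-$O(\eps^3)$ argument. In the paper's construction this is moot: $\varphi_\eps$ is built to vanish identically on a neighborhood of $\Sscr_\eps(t)$ (namely $B(h(t),\tfrac32\eps)\supset B(h_\eps(t),\eps)$ for $\eps$ small, using $h_\eps\to h$ uniformly), so $\rho_\eps\equiv1$ on $\operatorname{supp}\varphi_\eps$ and the $\rho_\eps$ disappears exactly, not just in the limit. Your version, where $\varphi_\eps$ equals $\varphi(t,h_\eps(t))$ on the ball, is also admissible ($D(\varphi_\eps)=0$ there), but then the $O(\eps^3)$ estimate for, say, $\int_{\Sscr_\eps}\rho_\eps\ue\cdot\partial_t\varphi_\eps$ requires bounding $\partial_t\varphi_\eps$ on the ball, which brings $\dot h_\eps(t)\sim t^{-3/4}$ back in; it works after integrating in time, but it is an extra (and avoidable) step.

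\textbf{The compactness for the nonlinear term.} You invoke Aubin--Lions from a bound on $\partial_t\ue$ ``obtained directly from \eqref{ns-ob1}--\eqref{solid1}.'' That is not straightforward: the equation contains the pressure, and the body equations carry the factor $m_\eps^{-1}\sim\eps^{-3}$, so one cannot read off a clean $\eps$-uniform dual-space bound from the strong form. The workable route (which is what the paper does) is to estimate $\langle U_\eps(t)-U_\eps(s),\varphi\rangle$ \emph{through the weak formulation}, tested against the modified $\varphi_\eps$, obtain $\eps$-uniform H\"older equicontinuity in time into $H^{-2}_\sigma$, and then conclude by Arzel\`a--Ascoli and interpolation. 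This is morally Aubin--Lions but has to be executed through the admissible test functions, not the PDE in pointwise form.
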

The main novelty brought by the present paper is twofold. Firstly, there is no specific assumption for the density of the rigid body $\rho$, which is constant and independent of its size $\eps$. That is to say, we deal with the case where the rigid body shrinks to a \textit{massless} point:
$$m_\eps = \eps^3 m_1, \qquad J_\eps = \eps^5 J_1.  $$
Secondly, %as the rigid body can move, it is necessary to obtain a uniform estimate of the velocity of the rigid body.
as already mentioned in Remark \ref{remark_energy}, energy estimate is not enough to get such estimate when the mass changes along with $\eps$. We thus showed that the $L^p-L^q$ estimate, associated with the \textit{fluid-structure semigroup}, in 3D is $\eps-$invariant, i.e. independent of the size of body, allowing us to estimate the velocity of the rigid body in a uniform way. This therefore implies the uniform convergence of $h_\eps$, i.e., up to a subsequence, as $\eps\to 0$,
$$h_{\eps}\longrightarrow h \quad \text{uniformly in}\quad [0,T],$$
%$$h_\eps\rightharpoonup h \quad \text{weakly in}\quad W^{1,q}(0, T), $$
where $h\in W^{1, q}(0, T)$ for $1<q<\frac{4}{3}$.
Please refer to subsection \ref{subsection-unif3} for more details.

\begin{rmk}
{\rm By constructing an appropriate approximation sequence of the test function, we actually obtain a local strong convergence for the solution of the fluid-body system, up to a subsequence, i.e. for every $T>0$,
$$ u_{\eps_k} \longrightarrow u\quad\text{strongly in }  L^{2}(0,T;L^2\loc(\rline^3)).$$
The proof is presented in Section \ref{section-strong}.
}
\end{rmk}
%thanks to the long time behavior result in \cite{ervedoza2022large}, we obtain a uniform estimate for the velocity of the rigid body by proving that the $L^p-L^q$ decay estimate of the semigroup associated to the fluid-rigid body system is independent of $\eps$ and by using a fixed point argument. Secondly, we appropriately construct an admissible test function in the perforated domain, and this allows us to obtain some strong convergence and pass to the limit in the weak formulation. Moreover, we would like to emphasize that our strategy is different from \cite{feireisl2022motion3}. 

\begin{rmk}
{\rm Although our main result is stated for constant density, presumably our proof still works for the case where the density of the rigid body is larger than some positive $\rho_0$ independent of $\eps$. Moreover, when the rigid body shrinks to \textit{massive} point:
$$m_\eps = m_1, \qquad J_\eps = \eps^2 J_1,$$
a similar result can be proven without using the $L^p-L^q$ analysis. 
The above results should hold as well in the case of several bodies, as long as there is no collision.}
\end{rmk}

%\begin{rmk}
%Let us consider the asymptotic $\eps \to 0$ in the two following situations:
%\begin{itemize}
%\item An obstacle shrinking to a ``massive" pointwise particle:
%    $$m_\eps = m >0, \qquad J_\eps = \eps^2 J >0.  $$
%    This is indeed the case when the density of the small rigid body $\rho_\eps$ satisfies $\rho_\eps = \frac{\rho}{\eps^3}$ where $\rho$ is a constant.
%    \item An obstacle shrinking to a ``massless" pointwise particle:
%    $$m_\eps = \eps^3 m >0, \qquad J_\eps = \eps^5 J >0.  $$
%    In this case, the mass of the small rigid body tends to zero along with its size and the density $\rho_\eps$ is fixed.
%\end{itemize}
%When the rigid body tends to ``massive" pointwise particle, the asymptotic behavior is not too difficult because the energy estimate gives immediately the uniform estimates of the velocity of the rigid body. As mentioned in Remark \ref{remark_energy}, it is no longer the case  when the rigid body tends to ``massless" pointwise particle.
%\end{rmk}

\subsection{Organization of the paper}
In Section \ref{section-uniform}, we first introduce change of variables to formulate the fluid-body system in a fixed space domain. In particular, we give the uniform estimate for the solid velocity which is based on the properties of the fluid-structure semi-group. Then we constructed in Section \ref{section-testfunc} an approximation sequence of the test function for the fluid-body system and this will play a key role in the justification of the limit. To deal with the nonlinear term, we show a strong convergence of subsequence for the solution of fluid-body system in Section \ref{section-strong}.
Section \ref{section-limit} is devoted to passing to the limit from the fluid-body system to the pure fluid system.
%In the last section, we make a conclusion and also give some comments and remarks for the improvement possibilities.
%%%%%%%%%%++++++++++%%%%%%%%%%++++++++++%%%%%%%%%%++++++++++%%%%%%%%%%
\section{Uniform estimates on the Solid velocity}\label{section-uniform}
In this section, we shall reformulate the fluid-body system in an abstract 
differential equation by using the so-called fluid-structure operator and 
the fluid-structure semi-group introduced in \cite{ervedoza2022large}. 
The main point is to obtain the uniform estimate of the velocity of the 
body, which plays an important role in the construction of the cut-off 
argument in Section \ref{section-testfunc}.

%%%%%%%%%%++++++++++%%%%%%%%%%++++++++++%%%%%%%%%%++++++++++%%%%%%%%%%
\subsection{Change of variables}\label{subsection-unif1}

We first notice that, via a change of variables, the fluid-solid 
system \eqref{ns-ob1}-\eqref{initialcon1} can be written in a fixed spatial 
domain $\rline^3 = \Fscr_{\eps}^0 \cup \Sscr_{\eps}^0 $ with 
$\Fscr^0_\eps=\Fscr_\eps(0)$ and $\Sscr_\eps^0=\Sscr_\eps(0)$. 
Setting $x \mapsto y (t, x) := x + h_\eps(t)$ and 
$$
v_{\eps} (t, \m x) = u_{\eps} (t, \m x + h_\eps(t)), \quad
\pi_\eps (t, \m x) = p_\eps (t, \m x + h_\eps(t)),
$$
$$v_{\eps}^0 (x) = u_{\eps}^0 (x),  \quad
\dot h_{\eps} (t)= \lep (t),
$$
the system \eqref{ns-ob1}-\eqref{initialcon1}, for every $t>0$, reads, 
\begin{subequations}\label{governing}
\begin{alignat}{15}
\partial_t v_{\eps} + \left[(v_{\eps} - \lep)\cdot 
\nabla\right]v_{\eps}- \nu \Delta v_{\eps} + \nabla \pi_{\eps} =0  
& \quad \FORALL x \in \Fscr_{\eps}^0,  \label{eq01} 
\\
\div v_{\eps} = 0   & \quad
\FORALL x \in \Fscr_{\eps}^0,  \label{eq02} 
\\
m_{\eps} \dot \lep (t) = - \int_{\partial \Sscr_{\eps}^0} 
\sigma(v_\eps, \pi_\eps)\m n\m \ds, & \quad \label{eq03} 
\\
J_{\eps}\dot \omega_{\eps}(t) = - \int_{\partial \Sscr_{\eps}^0} x 
\times (\sigma (v_\eps, \pi_\eps)\m n)\m \ds, & \quad  \label{eq04} 
\\
v_{\eps}(t,x)=\lep (t)+ \omega_{\eps}(t) \times x & \quad 
\FORALL x \in \partial \Sscr_{\eps}^0,  \label{eq05} 
%\\\lim_{|x|\rightarrow \infty} v_{\eps}(t,x) =0 & \quad 
%\text{for}\; t \in [0, +\infty)  \label{eq06} 
\\
v_{\eps}(0, x) = v_{\eps}^0, \quad
\lep (0) = \lep^0, \quad
\omega_\eps(0) = \ome^0 & \quad \FORALL x \in \Fscr_{\eps}^0. 
\label{eq06} 
\end{alignat}
\end{subequations}
In the above fixed spacial framework, the global density introduced 
in \rfb{densityt} becomes
\begin{equation*}\label{newdensity}
\rho_\eps(x)=\mathbbm{1}_{\Fscr_\eps^0}(x) +\rho 
\mathbbm{1}_{\Sscr_\eps^0}(x) \FORALL x\in\rline^3.
\end{equation*}
Now we define the weak solution of the system \rfb{governing} as 
follows.

\begin{definition}\label{defv}
The triplet $(v_\eps, l_\eps, \omega_\eps)$ is a weak solution of 
the system \rfb{governing}, if, for every $T>0$, we have 
$$ v_\eps\in L^\infty\left(0, T; L^2(\rline^3)\right)\cap 
L^2\left(0, T; H^1(\rline^3)\right), $$
$$\quad v_\eps(t,x)=l_\eps(t)+\omega_\eps(t) \times x
\FORALL t\in[0,T), \quad x\in \Sscr_\eps^0,  $$
and $v_\eps$ verifies the equation in the following sense:
\begin{multline*}
-\int_{0}^{T} \int_{\rline^{3}} \rho_{\eps} v_\eps \cdot \left( 
\prt_t\m\varphi_{\eps}  + \left[\m(v_\eps - \lep)  \cdot 
\nabla\m\right] \varphi_{\eps} \right)\m
\dx \m \ds \\+ 2 \nu \int_{0}^{T} \int_{\rline^{3}} D (v_\eps) 
: D (\varphi_{\eps})  \,  \dx\m \ds 
= \int_{\rline^{3}} \rho_{\eps}\m v_\eps^0(x) \cdot 
\varphi_{\eps}(0, x)  \,  \dx,
\end{multline*}
for every $\varphi_\eps(t,x)\in C_c^1([0,T); H^1_\sigma(\rline^3))$ 
such that $D(\varphi_\eps)=0$ in $\Sscr_\eps^0.$
\end{definition}
It is not difficult to check that $u_\eps$ is a weak solution of 
\rfb{governing} in Definition \ref{defu} if and only if $v_\eps$ 
is a weak solution of \eqref{ns-ob1}-\eqref{initialcon1} in the 
sense of Definition \ref{defv}. 

Recalling the tensor $D$ introduced in \rfb{symmetric}, for every 
$1< q<\infty$ we define the function space
\begin{equation*}
X^q_\eps:=X^q_\eps(\rline^3)=\left\{ \Phi\in  L_\sigma^q
(\rline^3) \m\left|\m D(\Phi)=0 \,\,\,\text{in}\,\,\, 
\Sscr_\eps^0\right.\right\},
\end{equation*}
where the space $L_\sigma^q$ has been introduced in \rfb{spacefree}. 
Since every $\Phi\in X_\eps^q$ satisfies $D(\Phi)=0$ in 
$\Sscr_\eps^0$, there exists a unique couple $\begin{bmatrix}\m l 
&  \omega\m \end{bmatrix}^\intercal\in \rline^3\times\rline^3$ and 
$\varphi\in L_\sigma^q(\Fscr_\eps^0)$ such that 
\begin{equation}\label{extension}
\Phi(x)=\varphi(x)\mathbbm{1}_{\Fscr_\eps^0}(x)+(l+\omega\times x)
\mathbbm{1}_{\Sscr_\eps^0}(x).
\end{equation}
For the existence of $l$ and $\omega$, please refer to, for 
instance, Temam's book \cite[Lemma 1.1]{temam2018mathematical}. 
Therefore, the space $X_\eps^q$ can be specified as follows:
\begin{equation*}\label{Xeps}
 X_\eps^q=\begin{Bmatrix}\m\begin{bmatrix} \varphi \\ l \\ 
 \omega\end{bmatrix}\in L^q(\Fscr_\eps^0)\times 
 \rline^3\times\rline^3\m\left| \m\m\div\varphi=0 \,\,\,\,\,
 \text{in}\,\,\,\Fscr_\eps^0\right. \\
\,\,\,\text{and}\quad\varphi(x)\cdot n=(l+\omega\times x)\cdot 
n\quad\forall\m\m x\in\prt\Sscr_\eps^0 \end{Bmatrix},   
\end{equation*}
endowed with the norm
$$\lVert \Phi\rVert_{X_\eps^q}=\lVert \varphi\rVert_{q,\Fscr_\eps^0}
+|\m l\m|+|\m\omega\m|. $$

Note that $L^q(\rline^3)$ can be decomposed into 
$X_\eps^q$ with another two potential form spaces, we define the 
projection operator:
\begin{equation}\label{projection}
\mathbb{P}_\eps: L^q(\rline^3)\longrightarrow X_\eps^q.
\end{equation}
For more details about this decomposition, please refer to Wang 
and Xin \cite[Theorem 2.2]{wang2011analyticity} or Ervedoza 
et al. \cite[Proposition 3.1]{ervedoza2022large}.

To reformulate the equations \eqref{eq01}--\eqref{eq06}, for every 
$1< q<\infty$ we define the operator $\mathcal{A}^q_\eps : \Dscr(\mathcal{A}^q_\eps)\longrightarrow L^q (\rline^3) $  by
\begin{equation*}
	\mathcal{D}(\mathcal{A}^q_\eps):=\left\{ v\in W_0^{1,q}
	(\rline^3)\cap X_\eps^q\m\left|\m\m v|_{\Fscr_\eps^0}
 \in W^{2,q}(\Fscr_\eps^0)\right. \right\}, 
\end{equation*}
and
\begin{equation*}
	\mathcal{A}^q_\eps v :=
	\left\{
	\begin{array}{ll}
		-\nu \m\Delta v & \text{in}\ \Fscr_\eps^0,\\[0.2cm]
		\displaystyle \frac{2\m\nu}{m_\eps} \int_{\partial 
  \Sscr_\eps^0} D(v)n \, \ds + 
		\frac{2\m\nu}{J_\eps} \left(\int_{\partial \Sscr_\eps^0} 
  x\times\left(D(v)n\right) \, \ds\right)\times x & \text{in}\ \Sscr_\eps^0,
	\end{array}
	\right. 
\end{equation*}
for every $v\in\Dscr(\Ascr^q_\eps)$.

The {\em fluid-structure operator} $A^q_\eps 
: \Dscr(A^q_\eps)\longrightarrow X_\eps^q$ is defined by 
$$\Dscr(A^q_\eps)
=\Dscr(\Ascr^q_\eps),$$
and 
\begin{equation}\label{f-soper}
\begin{aligned}
A^q_\eps:= \mathbb{P}_\eps \mathcal{A}^q_\eps.
\end{aligned}
\end{equation}

For $v_\eps\in L_\sigma^q(\Fscr_\eps^0)$ with $D(v_\eps)=0$ in 
$\Sscr_\eps^0$, as we explained around \rfb{extension}, we denote 
by $V_\eps$ the extension of $v_\eps$, 
\begin{equation}\label{extendv}
V_\eps(t,x)=v_\eps(t, x)\mathbbm{1}_{\Fscr_\eps^0}(x)+(l_{v_\eps}(t)
+\omega_{v_\eps}(t)\times x)\mathbbm{1}_{\Sscr_\eps^0}(x).
\end{equation}
It is not difficult to see that we have the following relation:
$$ v_\eps(t,x)=V_\eps|_{\Fscr_\eps^0},\quad l_{v_\eps}(t)=
\frac{1}{m_\eps}\int_{\Sscr_\eps^0} V_\eps\m \dd x,\quad 
\omega_{v_\eps}(t)=-\frac{1}{J_\eps}\int_{\Sscr_\eps^0}V_\eps
\times x\m\dd x. $$
By using the fluid-structure operator $A_\eps^q$ defined above, 
together with the extension of $v_\eps$ in \rfb{extendv}, one 
can rewrite the system \eqref{eq01}--\eqref{eq06} in the 
following abstract form:
\begin{equation}\label{abstract}
\left\{
\begin{aligned}
&\m \dot V_{\eps}+ A_\eps^q V_{\eps}= \mathbb{P}_\eps \div 
F_\eps(v_\eps),\\
&\m V_\eps(0)=V^0_\eps,
\end{aligned}
\right.
\end{equation}
with
\begin{equation}\label{eq:Feps}
	F_\eps (v_\eps)= 
	\left\{
	\begin{aligned}
		&v_\eps\otimes (\ell_{v_\eps}- v_\eps) & \text{ 
  on $\Fscr_\eps^0$},\,\\
		&0  &\text{ on $\Sscr_\eps^0$}.
	\end{aligned}\right.
\end{equation}

According to \cite[Theorem 6.1]{ervedoza2022large}, the operator 
$A_\eps^q:\Dscr(A_\eps^q)\longrightarrow X_\eps^q$ in 
\rfb{f-soper} generates a bounded analytic semi-group 
$\tline_\eps^q=(\tline^q_{\eps,t})_{t\geq0}$ on $X_\eps^q$, 
which is called the {\em fluid-structure semi-group}.
Formally, we have the formula of the mild solution for \rfb{abstract}:
\begin{equation}\label{mild}
V_\eps(t)=\tline^q_\eps(t)V_\eps^0+\int_0^t\tline^q_\eps(t-s)
\mathbb{P}_\eps \div F_\eps(v_\eps(s))\dd s \FORALL t\geq0.
\end{equation}

\begin{rmk}
{\rm The projection operator $\mathbb{P}_\eps$ introduced in 
\rfb{projection} is actually a Leray type projection with 
additional condition $D(v_\eps)=0$ in $\Sscr_\eps^0$. Therefore, 
in the definition of $\Ascr_\eps^q$ above, the function $D(v)n$ 
can be replaced by $\sigma(v, \pi)n$ since the operator 
$\mathbb{P}_\eps$ vanishes for the vector fields coming from 
a potential, i.e. $\mathbb{P}_\eps (\nabla p_\eps) =0$.  }
\end{rmk}

%%%%%%%%%%++++++++++%%%%%%%%%%++++++++++%%%%%%%%%%++++++++++%%%%%%%%%%+++
\subsection{Decay estimates for the fluid-structure semi-group}
\label{subsection-unif2}
In this subsection, we quote the $L^p-L^q$ estimates of the 
fluid-structure semi-group from 
\cite[Theorem 7.1 and Lemma 8.1]{ervedoza2022large}. Since here 
we are concerned with the shrinking limit problem, we need to 
show that these estimates are independent of the size of the body 
$\eps$. This is the key tool to obtain the uniform estimates of 
the solid velocity. 

Recalling the formula of the mass $m_\eps$ and the inertia matrix $J_\eps$ in \rfb{formulamass}, after change of variables it becomes
$$m_\eps=\int_{\Sscr_\eps^0}\rho\m\dd x,\quad\quad (J_\eps)_{i, j}=\int_{\Sscr_\eps^0}\rho\m(I_3|x|^2-x_ix_j )\dd x\quad i, j=1,2,3. $$
Thereby we have the following relation:
\begin{equation}\label{massetc}
m_\eps=\eps^3m_1, \quad \quad  J_\eps=\eps^5J_1.
\end{equation}

\begin{thm}\label{thm-semigroupe}
For every $1<q<\infty$, the semi-group $\tline^q_\eps(t)$ on 
$X_\eps^q$ satisfies the following decay estimates:
	
$\bullet$  For $1 < q \leq p < \infty,$ there exists $C_1(p,q)>0$ 
such that
\begin{equation}\label{LpLq1}
\lVert\tline^q_\eps(t)V^0_\eps\rVert_{X^p_{\eps}} \leq C_1 (p, q)
\m t^{- \frac{3}{2} 
(\frac{1}{q} - \frac{1}{p}  )}\|V^0_\eps\|_{X^q_{\eps}}
\FORALL t>0,\m\m V_\eps^0\in X_\eps^q.
\end{equation}	
	
$\bullet$  For $1 < q < \infty$ and $p = \infty$, there exists 
$C_2(q)>0$ such that
\begin{equation}\label{LpLq2}
\left|\tline^q_\eps(t)V^0_\eps\right| \leq C_2 (q)\m t^{- \frac{3}{2q}}
\|V^0_\eps\|_{X^q_{\eps}}
\FORALL t>0,\m\m V_\eps^0\in X_\eps^q.
\end{equation}	

$\bullet$ For $\frac{3}{2} \leq q<\infty$ and $ q\leq p \leq 
\infty,$ there exists $C_3(p,q)>0$ such that for every $F_\eps 
\in L^{q}(\rline^3; \rline^{3\times3})$ satisfying $F_\eps=0$ 
in $\Sscr^0_\eps$ and $\div F_\eps \in L^r (\rline^3)$ for 
some $r \in (1, p\m]\setminus\{\infty\}$. Then we have
\begin{equation}\label{LpLq3}
\| \tline^r_\eps(t) \pline_\eps \div \, F_\eps \|_{X^{p}_{\eps}} 
\leq C_3 (p, q)\m t^{- \frac{3}{2} (\frac{1}{q} - \frac{1}{p}) - 
\frac{1}{2}} \|F_\eps\|_{L^q( \Fscr^0_\eps)} 
\FORALL t>0.
\end{equation}

$\bullet$ For $1<q\leq p\leq 3$, there exists $C_4(p,q)>0$ such 
that
\begin{equation}\label{LpLq4}
\lVert \nabla\tline^q_\eps V_\eps^0\rVert_{L^p(\Fscr_\eps^0)
}\leq C_4(p,q)\m t^{-\frac{3}{2}\left(\frac{1}{q}-
\frac{1}{p}\right)-\frac{1}{2}} \lVert V_\eps^0\rVert_{X_\eps^q} 
\FORALL t>0,\m\m V_\eps^0\in X^q_\eps.
\end{equation}
\end{thm}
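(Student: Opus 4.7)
The plan is a scaling argument reducing to the unit-size problem treated in \cite{ervedoza2022large}. The crucial input is that, since the density $\rho$ is constant, $m_\eps=\eps^3 m_1$ and $J_\eps=\eps^5 J_1$ with $m_1,J_1$ independent of $\eps$; after rescaling the body to unit size, the fluid-structure problem features only $\eps$-independent physical parameters.

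First I would introduce $\tilde x=x/\eps$, $\tilde t=t/\eps^2$, and
\[
\tilde v(\tilde t,\tilde x)=v(\eps^2\tilde t,\eps\tilde x),\quad \tilde l(\tilde t)=l(\eps^2\tilde t),\quad \tilde\omega(\tilde t)=\eps\,\omega(\eps^2\tilde t),\quad \tilde\pi=\eps\,\pi.
\]
A direct substitution (using the boundary scaling $ds_{\tilde x}=\eps^{-2}ds_x$ together with the factors of $\eps^3,\eps^5$ from \eqref{massetc}) shows that the linearized fluid-structure system on $\Fscr^0_\eps\cup\Sscr^0_\eps$ with parameters $(\nu,m_\eps,J_\eps)$ maps onto the same system on $\Fscr^0_1\cup\Sscr^0_1$ with $\eps$-independent parameters $(\nu,m_1,J_1)$. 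Consequently, in rescaled variables $\tline^q_\eps(t)V^0$ coincides with $\tline^q_1(\tilde t)\tilde V^0$, and Theorem 7.1 and Lemma 8.1 of \cite{ervedoza2022large} applied to the unit-size semigroup $\tline^q_1$ yield decay estimates whose constants depend only on $(p,q,\nu,m_1,J_1)$.

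The second step is to translate these inequalities back and to verify that the constants in \eqref{LpLq1}--\eqref{LpLq4} are $\eps$-independent. Using
\[
\|\tilde v\|_{L^p(\tilde\Fscr)}=\eps^{-3/p}\|v\|_{L^p(\Fscr^0_\eps)},\quad |\tilde l|=|l|,\quad |\tilde\omega|=\eps|\omega|,\quad \tilde t^{-\alpha}=\eps^{2\alpha}t^{-\alpha},
\]
with $\alpha=\tfrac{3}{2}\bigl(\tfrac{1}{q}-\tfrac{1}{p}\bigr)$, a bookkeeping of the $\eps$-powers shows that for the $L^p$ fluid contribution the exponents cancel by the very definition of $\alpha$, giving \eqref{LpLq1}. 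The $L^\infty$ bound \eqref{LpLq2} is immediate from scale invariance of the sup norm; in \eqref{LpLq4} the factor $\nabla_{\tilde x}=\eps\nabla_x$ is balanced by the extra $\tilde t^{-1/2}=\eps\,t^{-1/2}$ from the time rescaling; and \eqref{LpLq3} uses the pullback $\tilde F=\eps F(\eps^2\tilde t,\eps\tilde x)$ (so that $\div_{\tilde x}\tilde F=\eps^2\div_x F$) together with the fact that the Leray-type projection $\pline_\eps$ commutes with the rescaling, which is built into the Helmholtz decomposition recalled in \cite[Proposition 3.1]{ervedoza2022large}.

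The main obstacle will be the simultaneous uniform control of the $|l|$ and $|\omega|$ contributions in $\|\cdot\|_{X^p_\eps}$, since these scale as $|\tilde l|=|l|$ and $|\tilde\omega|=\eps|\omega|$, differently from the fluid $L^p$ part. To close the argument one has to check, for each of \eqref{LpLq1}--\eqref{LpLq4} separately, that the $\eps$-powers appearing on the body components of each inequality absorb correctly (using $\eps\leq 1$ together with the sphere symmetry, which decouples the translational and rotational equations and keeps the prefactors $m_1,J_1$ manifestly $\eps$-independent). This ledger-keeping, rather than any new analytic input, is what one has to execute carefully to obtain the $\eps$-uniform estimates stated in the theorem.
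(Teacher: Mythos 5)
Your overall plan — rescale $\tilde x = x/\eps$, $\tilde t = t/\eps^2$, use $m_\eps=\eps^3 m_1$, $J_\eps=\eps^5 J_1$ to reduce to a fixed unit-size problem, then invoke the $L^p$--$L^q$ theory of \cite{ervedoza2022large} — is exactly the paper's approach for \eqref{LpLq1}, \eqref{LpLq2} and \eqref{LpLq3}, including the correct identification of the anisotropic scaling of the $X^q_\eps$ norm (fluid part $\eps^{-3/q}$, $|l|$ invariant, $|\omega|$ gains a factor $\eps$) as the point where the bookkeeping must be done carefully.

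Where your proposal genuinely diverges, and where it has a gap, is \eqref{LpLq4}. You treat the gradient bound as if it were also already available for the unit-size semigroup in \cite{ervedoza2022large} and only needed to be rescaled ("the factor $\nabla_{\tilde x}=\eps\nabla_x$ is balanced by the extra $\tilde t^{-1/2}=\eps\, t^{-1/2}$"). But the paper cites only Theorem~7.1 and Lemma~8.1 of \cite{ervedoza2022large}, which provide \eqref{LpLq1}--\eqref{LpLq3} and not a gradient estimate; the paper therefore \emph{derives} \eqref{LpLq4} from \eqref{LpLq3} by a duality argument. Concretely, using $(A^q_\eps)^* = A^{q'}_\eps$ (\cite[Prop.~5.3]{ervedoza2022large}) and integration by parts with $F_\eps$ supported in $\Fscr^0_\eps$, one writes
\begin{equation*}
\lVert \nabla\tline_\eps^q(t) V_\eps^0\rVert_{L^r(\Fscr_\eps^0)}
= \sup_{\lVert F_\eps\rVert_{L^{r'}}\le 1}
\left\langle V_\eps^0,\ \tline_\eps^{q'}(t)\,\mathbb{P}_\eps\div F_\eps\right\rangle
\le \lVert V_\eps^0\rVert_{X_\eps^q}\ \sup_{\lVert F_\eps\rVert_{L^{r'}}\le 1}\lVert \tline_\eps^{q'}(t)\mathbb{P}_\eps\div F_\eps\rVert_{X_\eps^{q'}},
\end{equation*}
and then applies \eqref{LpLq3}; this is what produces the range $1<q\le p\le 3$ through the dual constraints $\frac{3}{2}\le r'<\infty$, $r'\le q'\le\infty$. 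Without this step, your argument does not establish \eqref{LpLq4}: you need to say where the unit-size gradient estimate comes from, and the duality route is the one that closes it using only the ingredients the paper actually has.

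Two further, smaller points. First, your scaling of $F$ in \eqref{LpLq3} is off: to make the initial datum $V_\eps^0 = \mathbb{P}_\eps \div F_\eps$ correspond to $V_1^0=\mathbb{P}_1\div F_1$ under the change of variables (so that the homogeneous semigroup applies), one must take $F_1(\tilde x) = \tfrac{1}{\eps}F_\eps(\eps\tilde x)$, as in the paper, not $\eps F(\eps\tilde x)$ — your version is the scaling that would be appropriate for an inhomogeneous forcing, not for the initial datum. Second, your resolution of the anisotropy issue ("the $\eps$-powers absorb correctly using $\eps\le 1$") is stated more confidently than the bookkeeping warrants: after unraveling the change of variables, the target-norm side also acquires $\eps$-powers (e.g.\ $\eps^{-3/p}$ multiplying the fluid $L^p$ part on the left), so the cancellation is not simply a matter of discarding small factors on the right. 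The paper itself states the equivalence without displaying the ledger; if you are going to present this as the crux, you should actually carry out the $\eps$-power accounting for each component of the $X^p_\eps$ norm rather than assert it.
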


\begin{proof}
As we already mentioned, for fixed $\eps>0$, these results have 
been proved in \cite{ervedoza2022large}. Here we only need to 
verify that the constants $C_1$, $C_2$, $C_3$ and $C_4$ are 
independent of $\eps$. To this aim, we set
$$\tilde x=\frac{x}{\eps},\quad \tilde t=\frac{t}{\eps^2},$$
and
\begin{equation}\label{tildev}
\begin{aligned}
&v_1(\m\tilde t, \tilde x\m):=v_\eps(t,x),\quad \pi_1(\m\tilde 
t, \tilde x\m):=\eps\m \pi_\eps(t,x),\\
&\quad l_{v_1}(\tilde t\m):=l_{v_\eps}(t),\quad \omega_{v_1}
(\tilde t\m):=\eps\m\omega_{v_\eps}(t).
\end{aligned}
\end{equation}
Note that, for every $1<q<\infty$, $V_\eps(t)=\tline_\eps^q(t)
V_\eps^0$ is the solution of the following system:
\begin{equation}\label{abstracthom}
\left\{
\begin{aligned}
&\m \dot V_{\eps}+ A_\eps^q V_{\eps}= 0,\\
&\m V_\eps(0)=V^0_\eps,
\end{aligned}
\right.
\end{equation}
where the operator $A_\eps^q$ has been introduced in \rfb{f-soper} 
and it generates the semi-group $\tline^q_\eps$ on $X_\eps^q$. 
After doing the calculation, together with \rfb{tildev} and 
\rfb{massetc}, we find that $V_1(\tilde t, \tilde x)$ defined by
$$V_1(\tilde t, \tilde x)=v_1(\tilde t, \tilde x)
\mathbbm{1}_{\Fscr_1^0}(\tilde x)+(l_{v_1}(\tilde t\m)+\omega_{v_1}
(\tilde t\m)\times \tilde x)\mathbbm{1}_{\Sscr_1^0}(\tilde x). $$
satisfying \rfb{abstracthom} with $\eps=1$. This implies that 
the system \rfb{abstracthom} is $\eps$-invariant and, in particular, 
that $V_1(\tilde t,\tilde x)=\tline_1(\tilde t\m)V_1^0$ is the 
solution of \rfb{abstracthom} with $\eps=1$. In this case, 
according to \cite[Theorem 7.1]{ervedoza2022large}, we have the 
estimate
$$\lVert\tline^q_1(t)V^0_1\rVert_{X^p_{1}} \leq C_1 (p, q) \m t^{- \frac{3}{2} 
(\frac{1}{q} - \frac{1}{p}  )}\|V^0_1\|_{X^q_{1}}
\FORALL t>0,\m\m V_1^0\in X_1^q. $$
Based on the above analysis, this is equivalent to \rfb{LpLq1}.

The estimate \rfb{LpLq2} can be verified in a similar way. For 
\rfb{LpLq3}, we define
$$F_1(\tilde x):=\frac{1}{\eps} F_\eps(x), $$
then $V_\eps(t)=\tline_\eps^r\mathbb{P}_\eps\div F_\eps$ also 
satisfies \rfb{abstracthom} with $V_\eps^0=\mathbb{P}_\eps\div F_\eps$. 
Hence, according to \cite[Lemma 8.1]{ervedoza2022large}, we also 
have \rfb{LpLq3} where $C_3$ only depends on $p$ and $q$.

For the estimate \rfb{LpLq4}, we employ a duality argument. For 
every $1<q<\infty$, the dual operator of $A_\eps^q$, denoted by 
$(A_\eps^q)^*$, is given by $(A_\eps^q)^*=A_\eps^{q'}$ with 
$\frac{1}{q}+\frac{1}{q'}=1$. For this, please refer to 
\cite[Proposition 5.3]{ervedoza2022large} for more details. We 
denote by $\tline_\eps^{q'}$ the corresponding semi-group generated 
by $A_\eps^{q'}$. Let $F_\eps\in L^{r'}\left(\rline^3;\rline
^{3\times 3}\right)$ satisfy $F_\eps=0$ in $\Sscr_\eps^0$ and 
$\div F_\eps\in L^{q'}(\rline^3)$ with $\frac{3}{2}
\leq r'<\infty$ and $r'\leq q'\leq\infty$. By density, we assume 
that $F_\eps\in C_c^\infty\left(\Fscr_\eps^0; \rline^{3\times 3}
\right)$. For every $V_\eps^0\in X_\eps^q$, we have
\begin{equation*}\label{dual}
\begin{aligned}
\lVert \nabla\tline_\eps^q V_\eps^0\rVert_{L^r(\Fscr_
\eps^0)}
&=\sup_{\lVert F_\eps\rVert_{L^{r'}}\leq 1}\left\langle 
\nabla\tline_\eps^q(t)V_\eps^0, \m F_\eps\right\rangle_{L^r, L^{r'}}\\
&=\sup_{\lVert F_\eps\rVert_{L^{r'}}\leq 1}\int_{\Fscr_\eps^0}
\nabla\tline_\eps^q V_\eps^0: F_\eps\\
&=-\sup_{\lVert F_\eps\rVert_{L^{r'}}\leq 1}\left\langle\tline_
\eps^q V_\eps^0,\m \mathbb{P}_\eps\div F_\eps\right\rangle_
{X_\eps^q, X_\eps^{q'}}\\
&=-\sup_{\lVert F_\eps\rVert_{L^{r'}}\leq 1}\left\langle V_\eps^0,
\m \tline_\eps^{q'}\mathbb{P}_\eps\div F_\eps\right\rangle_{X_\eps^q, 
X_\eps^{q'}}\\
&\leq \lVert V_\eps^0\rVert_{X_\eps^q}\sup_{\lVert F_\eps
\rVert_{L^{r'}}\leq 1}\lVert \tline_\eps^{q'}\mathbb{P}_\eps\div 
F_\eps\rVert_{X_\eps^{q'}},
\end{aligned}
\end{equation*}
where we used the fact that $F_\eps=0$ in $\Sscr_\eps^0$ and 
$F_\eps\cdot n=0$ on $\prt \Sscr_\eps^0$. By using the estimate 
\rfb{LpLq3} in the case that $r=p$, we obtain that
$$\lVert \nabla\tline_\eps^{q}V_\eps^0\rVert_{L^r(\Fscr_\eps^0)}\leq C_3(q', r')\m t^{-\frac{3}{2}\left(\frac{1}{r'}-
\frac{1}{q'}\right)-\frac{1}{2}}\lVert V_\eps^0\rVert_{X_\eps^q}. $$
Recalling the assumption on $r'$ and $q'$, we derive that 
$1<q\leq r\leq 3$. Note that $\frac{1}{r'}-\frac{1}{q'}=
\frac{1}{q}-\frac{1}{r}$, we immediately obtain the estimate 
\rfb{LpLq4}, which ends the proof.
\end{proof}

\begin{rmk}
{\rm During the proof of the last estimate \rfb{LpLq4}, we 
could say that the operator $\nabla\tline_\eps^q$ is the dual 
operator of $\tline_\eps^{q'}\mathbb{P}_\eps\div$ with respect 
to functions $F_\eps$ which vanishes at the boundary 
$\prt\Sscr_\eps^0$.
}
\end{rmk}

\begin{rmk}
{\rm It is worthwhile noting that, for fixed $\eps >0$, the estimates \rfb{LpLq1}-\rfb{LpLq4} in Theorem \ref{thm-semigroupe} hold for arbitrary shape of the rigid body since it is only related to the linearized fluid-body system.
}
\end{rmk}

%%%%%%%%%%++++++++++%%%%%%%%%%++++++++++%%%%%%%%%%++++++++++%%%%%%%%%%+++
\subsection{Uniform Estimate }\label{subsection-unif3}
 The main aim of this subsection is to obtain a uniform estimate 
 of $\dot h_\eps (t)$. To do this, we prove the existence and 
 uniqueness of the mild solution for the abstract differential 
 equation \rfb{abstract}-\rfb{eq:Feps}, which is equivalently 
 formulated from the fluid-body system \eqref{eq01}--\eqref{eq06}.
 
 The main idea is to show that \rfb{mild} is indeed the solution 
 of \rfb{abstract}-\rfb{eq:Feps}. Actually, this has been proved 
 in \cite[Theorem 8.2]{ervedoza2022large} by using the fixed-point 
 argument. Here, for completeness, we present again its statement 
 and proof. In particular,  we describe how small the initial data 
 we need and give the precise upper bound for the norm of the 
 solid velocity, which are not specified in \cite{ervedoza2022large}.
 
 Based on the extension of $v_\eps$ in \rfb{extendv}, for every 
 $t>0$, we identify $V_\eps$ with the triple $\begin{bmatrix} v_\eps 
 & l_{v_\eps} & \omega_{v_\eps}\end{bmatrix}^\intercal$. In particular, 
 $V_\eps=\begin{bmatrix} v_\eps & l_{v_\eps} & \omega_{v_\eps}
 \end{bmatrix}^\intercal$ is a mild solution of \rfb{abstract} if 
 and only if it satisfies \rfb{mild}. 
 
\begin{prop}\label{fixedpoint}
There exists $\gamma_0$, $\mu_0>0$ independent of $\eps$ such that 
for every $V_\eps^0\in X_\eps^2\cap X_\eps^3$ with 
\begin{equation*}\label{initialcond}
\lVert V_\eps^0\rVert_{X_\eps^2\cap X_\eps^3}\leq \gamma_0,
\end{equation*}
there exists a unique solution $V_\eps$ satisfying \rfb{mild} and
\begin{equation}\label{spacesolu}
\sup_{t>0} t^{\frac{3}{2}\left(\frac{1}{2}-\frac{1}{p}\right)}
\lVert V_\eps\rVert_{X_\eps^p}\leq \mu_0
\end{equation}
for every $2\leq p\leq\infty$.
\end{prop}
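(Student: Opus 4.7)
I would prove Proposition \ref{fixedpoint} by a small-data Banach fixed-point argument applied to the mild formulation \rfb{mild}. Since every estimate in Theorem \ref{thm-semigroupe} is $\eps$-independent, all constants appearing below are automatically $\eps$-independent, which is exactly what is needed for the $\mu_0$ in \rfb{spacesolu}.

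First I would introduce the Banach space
$$
E := \left\{ V \colon \lVert V\rVert_E := \sup_{p\in[2,\infty]}\sup_{t>0}\,
t^{\frac{3}{2}(\frac{1}{2}-\frac{1}{p})}\lVert V(t)\rVert_{X^p_\eps} < \infty\right\},
$$
together with the closed ball $B_{\mu_0} := \{V\in E : \lVert V\rVert_E \leq \mu_0\}$, and the map
$$
\Psi(V)(t) := \tline^2_\eps(t)V_\eps^0 + \int_0^t \tline^{q}_\eps(t-s)\, \mathbb{P}_\eps\div F_\eps(v(s))\,\dd s,
$$
where $F_\eps(v)=v\otimes(l_v-v)$ as in \rfb{eq:Feps}. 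The linear piece is handled by applying \rfb{LpLq1} and \rfb{LpLq2} with $q=2$, giving
$t^{\frac{3}{2}(\frac{1}{2}-\frac{1}{p})}\lVert \tline^2_\eps(t)V_\eps^0\rVert_{X^p_\eps}\leq C\lVert V_\eps^0\rVert_{X^2_\eps}$;
the companion bounds starting from the $X^3_\eps$-part of the datum, through \rfb{LpLq1} with $q=3$, supply the critical-scaling short-time behaviour, which is precisely why the initial condition is required to lie in $X^2_\eps\cap X^3_\eps$.

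For the Duhamel piece I would split $F_\eps(v,w) = v\otimes l_w - v\otimes w$ and, for each target $p\in[2,\infty]$, pick H\"older exponents $p_1,p_2\geq 2$ with $\tfrac{1}{p_1}+\tfrac{1}{p_2}=\tfrac{1}{q}$, $q\geq\tfrac{3}{2}$, and $q\leq p$. Using the weighted bound
$$
\lVert v(s)\otimes w(s)\rVert_{L^q(\Fscr_\eps^0)}
\leq \lVert v(s)\rVert_{L^{p_1}(\Fscr_\eps^0)}\lVert w(s)\rVert_{L^{p_2}(\Fscr_\eps^0)}
\leq s^{-\frac{3}{2}(1-\frac{1}{q})}\lVert V\rVert_E\lVert W\rVert_E,
$$
together with \rfb{LpLq3} and a Beta-function computation, I would bound the Duhamel integral in $X^p_\eps$ by a multiple of $t^{-\frac{3}{2}(\frac{1}{2}-\frac{1}{p})}\lVert V\rVert_E\lVert W\rVert_E$. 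The companion piece $v\otimes l_w$ is treated analogously, the factor $|l_w(s)|\leq s^{-3/4}\lVert W\rVert_E$ playing the role of an $L^\infty$ weight. Combining these estimates yields
$$
\lVert \Psi(V)\rVert_E \leq C_0\bigl(\lVert V_\eps^0\rVert_{X^2_\eps\cap X^3_\eps}+\lVert V\rVert_E^2\bigr),\quad
\lVert \Psi(V)-\Psi(W)\rVert_E \leq C_0(\lVert V\rVert_E+\lVert W\rVert_E)\lVert V-W\rVert_E.
$$

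Finally I would fix $\mu_0>0$ with $2C_0\mu_0<1$ and set $\gamma_0:=\mu_0/(2C_0)$, so that $\Psi$ maps $B_{\mu_0}$ into itself and contracts strictly there; Banach's fixed-point theorem then delivers a unique $V_\eps\in B_{\mu_0}$, which by construction satisfies \rfb{mild} and the uniform bound \rfb{spacesolu}. The main obstacle is the bilinear estimate: one has to reconcile the constraints $q\geq\tfrac{3}{2}$ and $q\leq p\leq\infty$ in \rfb{LpLq3} with the integrability of the $(t-s)$- and $s$-singularities in the Duhamel integral, and, via a judicious $p$-dependent choice of $(p_1,p_2,q)$, arrange for the resulting Beta integral to output exactly the target weight $t^{-\frac{3}{2}(\frac{1}{2}-\frac{1}{p})}$ uniformly in $p\in[2,\infty]$. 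It is precisely at this step that both the $X^2_\eps$ part (for $p$ near $2$) and the critical $X^3_\eps$ part (for large $p$ and small $t$) of the initial data are invoked, and that the $\eps$-uniformity from Theorem \ref{thm-semigroupe} is essential.
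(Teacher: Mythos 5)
There is a genuine gap in your proposal, and it sits exactly at the bilinear estimate you identify as the main obstacle. Your single Banach space $E$ encodes only the $X^2_\eps$-based decay rates $t^{-\frac{3}{2}(\frac{1}{2}-\frac{1}{p})}$, i.e.\ subcritical (Leray) scaling, while the nonlinearity $v\otimes v$ is scale-critical for $X^3_\eps$. If you feed your bound $\lVert v(s)\otimes w(s)\rVert_{L^q}\leq s^{-\frac{3}{2}(1-\frac{1}{q})}\lVert V\rVert_E\lVert W\rVert_E$ into \rfb{LpLq3} and compute the $t$-power coming out of the Duhamel integral, you get
$$
\int_0^t (t-s)^{-\frac{3}{2}(\frac{1}{q}-\frac{1}{p})-\frac{1}{2}}\,s^{-\frac{3}{2}(1-\frac{1}{q})}\,\dd s
= C\, t^{-1+\frac{3}{2p}},
$$
whereas the target weight dictated by $E$ is $t^{-\frac{3}{4}+\frac{3}{2p}}$. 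The exponents disagree by $t^{-1/4}$, independently of the choice of $(p_1,p_2,q)$ — it is a pure scaling mismatch and no ``judicious $p$-dependent choice'' can remove it. Concretely, at $p=2$ you would need $\sup_{t>0}\lVert\Psi(V)(t)\rVert_{X^2_\eps}<\infty$, but your estimate produces $t^{-1/4}\lVert V\rVert_E^2$, which blows up as $t\to0^+$. In addition, for $p$ close to $\infty$ the Beta integral itself diverges: convergence at $\sigma=0$ forces $q<3$, and convergence at $\sigma=1$ forces $\frac{1}{q}-\frac{1}{p}<\frac{1}{3}$, which for $p=\infty$ requires $q>3$ — an empty set. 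So $\Psi$ does not map $E$ into $E$, and Banach's fixed-point theorem cannot be invoked in this space.

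You already sense that the $X^3_\eps$ part of the datum is ``where the critical-scaling short-time behaviour'' comes in, but your space $E$ never records it, so it cannot be used inductively inside the fixed-point iteration. This is precisely what the paper's two-step structure fixes. In Step 1 the contraction is run in a space $\Kscr_\eps$ whose weights are calibrated to $X^3_\eps$-scaling, namely $t^{1/4}V\in L^\infty(X^6_\eps)$, $t^{1/2}V\in L^\infty(X^\infty_\eps)$, together with a weighted $L^3$ bound on $\nabla v_\eps$; with those weights the Beta powers cancel exactly. In Step 2 the extra $X^2_\eps$ information on the datum is propagated a posteriori, not by replacing $\lVert V(s)\rVert_{X^2_\eps}$ by a time weight, but by keeping it as the unknown and using the already-established critical decay $\lVert V(s)\rVert_{X^\infty_\eps}\lesssim s^{-1/2}$ from $\Kscr_\eps$ so that $\int_0^t(t-s)^{-1/2}s^{-1/2}\,\dd s$ is a constant; a smallness absorption then bounds $\sup_t\lVert V(t)\rVert_{X^2_\eps}$, and the full range $2\leq p\leq\infty$ in \rfb{spacesolu} follows by interpolation. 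The moral is that the fixed-point must live at critical scaling; the subcritical $X^2_\eps$ decay is a bootstrap consequence, not a viable ambient norm for the contraction.
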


\begin{proof}
To present the proof clearly, we divide it into the following 
two steps.

{\bf Step 1:} {\em The existence and uniqueness of $V_\eps$ with $V_\eps^0\in 
X_\eps^3$. } The main idea is to use Banach’s contraction principle. 
We first define the space 
\begin{equation*}
\Kscr_\eps:=\begin{Bmatrix}V_\eps=\begin{bmatrix} v_\eps \\ 
l_{v_\eps} \\ \omega_{v_\eps} \end{bmatrix} \left|\m \m\m 
t^{\frac{1}{4}}V_\eps\in C^0\left((0,\infty); X_\eps^6\right), 
\quad t^{\frac{1}{2}} V_\eps\in C^0\left((0,\infty); 
X_\eps^\infty\right)\right.\\
\quad \text{and}\quad \min\left\{t^{\frac{1}{2}}, 1\right\}
\nabla v_\eps\in C^0\left((0,\infty); L^3(\Fscr_\eps^0)
\right) \end{Bmatrix},
\end{equation*}
endowed with the norm
\begin{equation*}
\begin{aligned}
\lVert V_\eps\rVert_{\Kscr_\eps} :=\lVert\m t^{\frac{1}{4}} 
V_\eps(t)\rVert_{L^\infty\left(0,\infty; X_\eps^6\right)}&+
\lVert \m t^{\frac{1}{2}} V_\eps(t)\rVert_{L^\infty
\left(0,\infty; X_\eps^\infty\right)}\\
&+\lVert\m \min\{t^{\frac{1}{2}}, 1\}\nabla v_\eps\rVert_
{L^\infty\left(0,\infty; L^3(\Fscr_\eps^0)\right)}.
\end{aligned}
\end{equation*}
Now, according to the Duhamel formulation \eqref{mild}, we introduce the map 
$$
\Lambda_\eps: \Kscr_\eps\longrightarrow \Kscr_\eps,$$
with 
$$
\Lambda_\eps(V_\eps)(t):=\tline_\eps(t)V_\eps^0+\int_0^t\tline_
\eps(t-s)\mathbb{P}_\eps \div F_\eps(v_\eps(s))\dd s \FORALL t\geq0,
$$
where $F_\eps(v_\eps)$ has been introduced in \rfb{eq:Feps}. 
We further introduce
$$ \Phi(V_\eps, W_\eps)(t)=\int_0^t\tline_\eps(t-s)\mathbb{P}_\eps 
\div G_\eps(v_\eps, w_\eps)(s)\dd s \FORALL t\geq0,$$
with 
\begin{equation}\label{G}
G_\eps(v_\eps, w_\eps)=
\left\{
	\begin{aligned}
		&v_\eps\otimes (\ell_{w_\eps}- w_\eps) & \text{ 
  on $\Fscr_\eps^0$},\,\\
		&0  &\text{ on $\Sscr_\eps^0$}.
	\end{aligned}\right.   
\end{equation}
Note that for $v_\eps$, $w_\eps$ satisfy the conditions in 
$\Kscr_\eps$, it is not difficult to see that $\div G_\eps(v_\eps, 
w_\eps)\in L^3(\rline^3)$ for fixed $s>0$. This 
implies that we could use the estimate \rfb{LpLq3} for 
$3\leq p\leq \infty $. In the remaining part of the proof, we 
use the notation $\mathcal{B}(\cdot, \cdot)$ to represent the function:
$$\mathcal{B}(\alpha, \beta):=\int_0^1(1-s)^{-\alpha} s^{-\beta}\dd s. $$
By change of variable, we notice that for all $t >0$, 
$$ \mathcal{B}\left(\alpha, \beta\right) = t^{\alpha + \beta -1}\int_0^t(t-s)^{- \alpha}s^{- \beta}
\dd s.$$
By using H\"older inequality, we do the following calculation:
\begin{equation}\label{1st}
\begin{aligned}
t^{\frac{1}{4}}\lVert\Phi(v_\eps, w_\eps)\rVert_{X_\eps^6}
%&\leq t^{\frac{1}{4}} C_3(6,6)\int_0^t(t-s)^{-\frac{1}{2}}
%\left(\lVert V_\eps\otimes W_\eps \rVert_{X_\eps^6}+|l_{w_\eps}|
%\lVert V_\eps\rVert_{X_\eps^6}\right)\dd s\\
&\leq t^{\frac{1}{4}} C_3(6,6)\int_0^t(t-s)^{\frac{1}{2}}\left(
\lVert V_\eps\rVert_{X_\eps^\infty}\lVert W_\eps\rVert_{X_\eps^6}
+|l_{w_\eps}|\lVert W_\eps\rVert_{X_\eps^6}\right)\dd s\\
%&\leq 2\m C_3(6, 6)\lVert V_\eps\rVert_{\Kscr_\eps}\lVert 
%W_\eps\rVert_{\Kscr_\eps}\m t^{\frac{1}{4}}\int_0^t(t-s)^{-\frac{1}{2}}s^{-\frac{3}{4}}\dd s\\
&\leq 2\m C_3(6,6)\lVert V_\eps\rVert_{\Kscr_\eps}\lVert 
W_\eps\rVert_{\Kscr_\eps}\m \mathcal{B} \left(\frac{1}{2}, \frac{3}{4}\right),
\end{aligned}
\end{equation}
where we used the estimate \rfb{LpLq3} for $p=6=q$.

Similarly, we have
\begin{equation}\label{2nd}
\begin{aligned}
t^{\frac{1}{2}}\lVert\Phi(v_\eps, w_\eps)\rVert_{X_\eps^\infty}
%&\leq 2\m C_3(\infty, 6)\lVert V_\eps\rVert_{\Kscr_\eps}\lVert 
%W_\eps\rVert_{\Kscr_\eps}t^{\frac{1}{2}} \int_0^t(t-s)^{-\frac{3}{4}}s^{-\frac{3}{4}}\dd s\\
&\leq 2\m C_3(\infty, 6)\lVert V_\eps\rVert_{\Kscr_\eps}\lVert 
W_\eps\rVert_{\Kscr_\eps} \mathcal{B}\left(\frac{3}{4}, \frac{3}{4}\right),
\end{aligned}
\end{equation}
where we used the estimate \rfb{LpLq3} with $p=\infty$ and $q=6$.

For the last constrained condition in $\Kscr_\eps$, we also have
\begin{equation}\label{3rd-}
\begin{aligned}
&\min\{t^{\frac{1}{2}}, 1\}\lVert \nabla\Phi(v_\eps, w_\eps)
\rVert_{L^3(\Fscr_\eps^0)}\\
%\leq&\min\{t^{\frac{1}{2}}, 1\}\int_0^t\m C_4(3,2)(t-s)^{-
%\frac{3}{4}}\lVert w_\eps\cdot \nabla v_\eps\rVert_{\left[L^2
%(\Fscr_\eps^0)\right]^{3\times 3}}\m\dd s\\
%&\quad+\min\{t^{\frac{1}{2}}, 1\}\int_0^t C_4(3,3)(t-s)^{-\frac{1}{2}}|l_{w_\eps}|\lVert\nabla v_\eps\rVert_{\left[L^3(\Fscr_\eps^0)\right]^{3\times 3}}\m\dd s\\
\leq& \min\{t^{\frac{1}{2}}, 1\}\int_0^t\m C_4(3,2)(t-s)^{-\frac{3}{4}}
\lVert W_\eps\rVert_{X_\eps^6}\lVert \nabla v_\eps\rVert_{L^3(\Fscr_\eps^0)}\m\dd s\\
&\quad+\min\{t^{\frac{1}{2}}, 1\}\int_0^t C_4(3,3)(t-s)^{
-\frac{1}{2}}|l_{w_\eps}|\lVert\nabla v_\eps\rVert_{L^3(
\Fscr_\eps^0)}\m\dd s\\
\leq &\m C_4(3,2)\lVert V_\eps\rVert_{\Kscr_\eps} \lVert W_\eps
\rVert_{\Kscr_\eps}\int_0^t(t-s)^{-\frac{3}{4}}s^{-\frac{1}{4}}
\frac{\min\{t^{\frac{1}{2}}, 1\}}{\min\{s^{\frac{1}{2}}, 1\}}\dd s\\
&\quad+C_4(3,3)\lVert V_\eps\rVert_{\Kscr_\eps} \lVert W_\eps
\rVert_{\Kscr_\eps} \int_0^t(t-s)^{-\frac{1}{2}}\frac{
\min\{t^{\frac{1}{2}}, 1\}}{\min\{s^{\frac{3}{4}}, s^{\frac{1}{2}}\}}\dd s,
\end{aligned}
\end{equation}
where we used the estimate \rfb{LpLq4} with $p=3$ and $q=2$, $q=3$. 
In particular, for $W_\eps\in \Kscr_\eps$, we used
$$|l_{w_\eps}|\leq \frac{1}{\max\{s^{\frac{1}{4}}, s^{\frac{1}{2}}\}}
\lVert W_\eps\rVert_{\Kscr_\eps}.  $$
and
$$\max\{s^{\frac{1}{4}}, s^{\frac{1}{2}}\}\min\{s^{\frac{1}{2}}, 
1\}\geq\min\{s^{\frac{3}{4}}, s^{\frac{1}{2}}\} .$$
Note that for every $t\in(0,1)$ we have
$$\int_0^t \frac{\min\{t^{\frac{1}{2}}, 1\}}{(t-s)^{\frac{3}{4}}
s^{\frac{1}{4}}\min\{s^{\frac{1}{2}}, 1\}}\dd s
= \int_0^t\frac{t^{\frac{1}{2}}}{(t-s)^{\frac{3}{4}}
s^{\frac{3}{4}}}\dd s = \mathcal{B}\left(\frac{3}{4}, 
\frac{3}{4}\right),$$
$$\int_0^t\frac{\min\{t^{\frac{1}{2}}, 1\}}{(t-s)^{\frac{1}{2}}
\min\{s^{\frac{3}{4}}, s^{\frac{1}{2}}\}} \dd s
= \int_0^t\frac{t^{\frac{1}{2}}}{(t-s)^{\frac{1}{2}}
s^{\frac{3}{4}}} \dd s 
=t^{\frac{1}{4}}\mathcal{B}\left(\frac{1}{2}, \frac{3}{4}\right)\leq \mathcal{B}
\left(\frac{1}{2}, \frac{3}{4}\right).$$
For $t\geq 1$, we have
\begin{equation*}
\begin{aligned}
\int_0^t\frac{\min\{t^{\frac{1}{2}}, 1\}}{(t-s)^{\frac{3}{4}}
\min\{s^{\frac{3}{4}}, s^{\frac{1}{4}}\}}\dd s
%=&\int_0^t\frac{1}{(t-s)^{\frac{3}{4}}s^{\frac{3}{4}}}\dd s+
%\int_t^1\frac{1}{(t-s)^{\frac{3}{4}}s^{\frac{1}{4}}}\dd s\\
%\leq&\m t^{-\frac{1}{2}}\int_0^1(1-\tau)^{-\frac{3}{4}}
%\tau^{-\frac{3}{4}}\dd \tau+\int_0^1(1-\tau)^{-\frac{3}{4}}\tau^{-\frac{1}{4}}\dd \tau\\
\leq & \mathcal{B}\left(\frac{3}{4}, \frac{3}{4}\right)+\mathcal{B} \left(\frac{3}{4}, 
\frac{1}{4}\right), 
\end{aligned}
\end{equation*}
\begin{equation*}
\begin{aligned}
\int_0^t\frac{\min\{t^{\frac{1}{2}}, 1\}}{(t-s)^{\frac{1}{2}}
\min\{s^{\frac{3}{4}}, s^{\frac{1}{2}}\}}\dd s
%\leq &\int_0^1\frac{1}{(t-s)^{\frac{1}{2}}s^{\frac{3}{4}}}\dd s
%+\int_t^1\frac{1}{(t-s)^{\frac{1}{2}}s^{\frac{1}{2}}}\dd s \\
%\leq &\m t^{-\frac{1}{4}}B\left(\frac{1}{2}, \frac{3}{4}\right)
%+B\left(\frac{1}{2}, \frac{1}{2}\right)\\
\leq &
\mathcal{B}\left(\frac{1}{2}, \frac{3}{4}\right)+\mathcal{B}\left(\frac{1}{2}, 
\frac{1}{2}\right).
\end{aligned}
\end{equation*}
Based on the above calculation, we obtain from \rfb{3rd-} that
\begin{equation}\label{3rd}
\min\{t^{\frac{1}{2}}, 1\}\lVert \nabla\Phi(v_\eps, w_\eps)
\rVert_{L^3(\Fscr_\eps^0)}\\
\leq \tilde C\lVert V_\eps\rVert_{\Kscr_\eps}\lVert W_\eps
\rVert_{\Kscr_\eps},
\end{equation}
with $$\tilde C=C_4(3,2)\left( \mathcal{B}\left(\frac{3}{4}, \frac{3}{4}
\right)+\mathcal{B}\left(\frac{3}{4}, \frac{1}{4}\right)\right)+C_4(3,3)
\left(\mathcal{B}\left(\frac{1}{2}, \frac{3}{4}\right)+\mathcal{B}\left(\frac{1}{2}, 
\frac{1}{2}\right)\right). $$

Now putting together \rfb{1st}-\rfb{3rd}, we have
\begin{equation}\label{Dcons}
\lVert\Phi(v_\eps, w_\eps)\rVert_{\Kscr_\eps}\leq D_0\lVert 
V_\eps\rVert_{\Kscr_\eps}\lVert W_\eps\rVert_{\Kscr_\eps},
\end{equation}
where the constant $D_0$ is
$$ D_0=2C_3(6,6)\mathcal{B}\left(\frac{1}{2}, \frac{3}{4}\right)+2C_3(
\infty, 6)\mathcal{B}\left(\frac{3}{4}, \frac{3}{4}\right)+\tilde C.
$$

Now for $\lVert \tline_\eps(t) V_\eps^0\rVert_{\Kscr_\eps}$, 
we have estimate
$$ t^{\frac{1}{4}}\lVert \tline_\eps(t) V_\eps^0\rVert_{X_\eps^6}
\leq C_1(6,3)\lVert V_\eps^0\rVert_{X_\eps^3},\quad
t^{\frac{1}{2}}\lVert\tline_\eps(t) V_\eps^0\rVert_{X_\eps^\infty}
\leq C_2(3)\lVert V_\eps^0\rVert_{X_\eps^3}, $$
and 
\begin{multline*}
\min\{t^{\frac{1}{2}}, 1\}\lVert\nabla \tline_\eps(t) V_\eps^0
\rVert_{L^3(\Fscr_\eps^0)}
\leq \min\{t^{\frac{1}{2}}, 1\} C_4(3,3)t^{-\frac{1}{2}}\lVert 
V_\eps^0\rVert_{X_\eps^3}\\
\leq \left\{
\begin{aligned}
&C_4(3,3)\lVert V_\eps^0\rVert_{X_\eps^3}\quad (0<t<1)\\
&t^{-\frac{1}{2}} C_4(3,3)\lVert V_\eps^0\rVert_{X_\eps^3}\quad 
(t\geq 1)
\end{aligned}\right.
\leq C_4(3,3)\lVert V_\eps^0\rVert_{X_\eps^3}.
\end{multline*}
Hence, we obtain
\begin{equation}\label{tline}
\lVert \tline_\eps(t) V_\eps^0\rVert_{\Kscr_\eps}\leq D_1\lVert 
V_\eps^0\rVert_{X_\eps^3},
\end{equation}
where $$ D_1=C_1(6,3)+C_2(3)+C_4(3,3).$$

Combining with \rfb{Dcons} and \rfb{tline} we arrive at
\begin{equation}\label{Lambdaes}
\lVert\Lambda_\eps (V_\eps)\rVert_{\Kscr_\eps}\leq D_1\lVert 
V_\eps^0\rVert_{X_\eps^3}+D_0\lVert V_\eps\rVert^2_{\Kscr_\eps}.
\end{equation}
Setting
\begin{equation*}\label{Rlambda}
R:=\frac{1}{4D_0}, \quad\lambda_0:=\min\left\{\frac{R}{2D_1}, 
R\right\}.
\end{equation*} 
Let $\lVert V_\eps^0\rVert_{X_\eps^3}\leq \lambda_0$. Assuming 
that $\lVert V_\eps\rVert_{\Kscr_\eps}\leq R$ we derive from 
\rfb{Lambdaes} that
$$\lVert \Lambda_\eps(V_\eps)\rVert_{\Kscr_\eps}\leq \frac{R}{2}
+\frac{R}{4}\leq R,  $$
which gives that the closed ball $B_{\Kscr_\eps}(0, R):=\{V_\eps
\in \Kscr_\eps\m|\m \lVert V_\eps\rVert_{\Kscr_\eps}\leq R \}$ 
is $\Lambda_\eps$-invariant. With the same initial data, for $V_\eps$, $W_\eps\in 
B_{\Kscr_\eps}(0, R)$,  taking 
\rfb{Dcons} into account, we compute
\begin{equation*}
\begin{aligned}
\lVert \Lambda_\eps(V_\eps)-\Lambda_\eps(W_\eps)\rVert_{\Kscr_\eps}
&=\lVert \Phi(v_\eps, v_\eps-w_\eps)+\Phi(v_\eps-w_\eps, w_\eps)
\rVert_{\Kscr_\eps} \\
&\leq 2D_0 R\lVert V_\eps-W_\eps\rVert_{\Kscr_\eps}\leq \frac{1}{2}
\lVert V_\eps-W_\eps\rVert_{\Kscr_\eps},
\end{aligned}
\end{equation*}
which implies that $\Lambda_\eps$ is a contraction mapping. 
According to Banach fixed-point theorem, there exists a unique 
$V_\eps\in \Kscr_\eps$, such that $\Lambda_\eps(V_\eps)=V_\eps$.
Moreover, from \rfb{Lambdaes} we find that there exists $C_0$, 
such that
\begin{equation}\label{boundsol}
\lVert V_\eps\rVert_{\Kscr_\eps}\leq C_0 \lVert V_\eps^0\rVert_{X_\eps^3}.
\end{equation}

{\bf Step 2:} {\em Further regularity of $V_\eps$ when $V_\eps^0
\in X_\eps^2\cap X_\eps^3$.} For $V_\eps^0\in X_\eps^2\cap X_\eps^3$, 
we shall show that the solution $V_\eps$ obtained from Step 1 
satisfies additional regularity. Now we reconsider the function 
$G_\eps(v_\eps, v_\eps)$ defined in \rfb{G} and check the 
availability of the estimate \rfb{LpLq3}. Before going this, we 
first show in this case that $\nabla v_\eps$ have new property:
\begin{equation}
\begin{aligned}
&\min\{t^{\frac{1}{2}}, 1\}\lVert\nabla v_\eps\rVert_{L^2
(\Fscr_\eps^0)}\\
\leq&\min\{t^{\frac{1}{2}}, 1\} \left(\lVert\nabla
\tline_\eps V_\eps^0\rVert_{L^2}+\lVert\nabla\Phi(v_\eps, 
v_\eps)\rVert_{L^2}\right)\\
%\leq&\min\{t^{\frac{1}{2}}, 1\}\left(t^{-\frac{1}{2}}\lVert 
%V_\eps^0\rVert_{X_\eps^2}+\int_0^t(t-s)^{-\frac{1}{2}}
%\left(|l_{v_\eps}|\lVert \nabla v_\eps\rVert_{L^2}+\lVert 
%v_\eps\rVert _{L^\infty}\lVert\nabla v_\eps\rVert_{L^2}\right)\right)\\
%\leq & C_4(2,2)\lVert V_\eps^0\rVert_{X_\eps^2}+2 C_4(2,2)
%\min\{t^{\frac{1}{2}}, 1\}\lVert\nabla v_\eps\rVert_{L^2}B
%\left(\frac{1}{2}, \frac{1}{2}\right)\lVert V_\eps\rVert_{\Kscr_\eps}\\
\leq & C_4(2,2)\lVert V_\eps^0\rVert_{X_\eps^2}+2 C_4(2,2)C_0
\lVert V_\eps^0\rVert_{X_\eps^3}\mathcal{B}\left(\frac{1}{2}, \frac{1}{2}
\right)\min\{t^{\frac{1}{2}}, 1\}\lVert\nabla v_\eps\rVert_{L^2},
\end{aligned}
\end{equation}
where we used \rfb{boundsol}. 
From above, we see that if $V_\eps^0$ satisfies
$$\lVert V_\eps^0\rVert_{X_\eps^3}\leq \min\left\{\lambda_0,
\m\m \frac{1}{2\m C_0\m C_4(2,2)\mathcal{B}\left(\frac{1}{2}, \frac{1}{2}
\right)}\right\},  $$
we derive from \rfb{nablav} that $\min\{t^{\frac{1}{2}}, 1\}
\lVert\nabla v_\eps\rVert_{L^\infty\left(L^2(\Fscr_\eps^0)
\right)}<\infty$. Based on this condition, it 
is not difficult to check that $\div G_\eps(v_\eps, v_\eps)\in 
L^2(\rline^3)$, which means that the estimate 
\rfb{LpLq3} makes sense when $p\geq 2$. 

By using \rfb{LpLq1} and \rfb{LpLq3}, we do some calculations 
as follows:
\begin{equation*}
\begin{aligned}
\lVert V_\eps\rVert_{X_\eps^2}
&\leq C_1(2,2)\lVert V_\eps^0\rVert_{X_\eps^2}+C_3(2,2)\int_0^t
(t-s)^{-\frac{1}{2}}\left(|l_{v_\eps}|\lVert V_\eps\rVert_{X_\eps^2}
+\lVert V_\eps\rVert_{X_\eps^\infty}\lVert V_\eps\rVert_{X_\eps^2}\right)\\
%&\leq C_1(2,2)\lvert V_\eps^0\rVert_{X_\eps^2}+2\m C_3(2,2)\lVert V_\eps\rVert_{\Kscr_\eps}\lVert V_\eps\rVert_{X_\eps^2}\\
&\leq C_1(2,2)\lVert V_\eps^0\rVert_{X_\eps^2}+2\m C_0\m C_3(2,2)
\lVert V_\eps^0\rVert_{X_\eps^3}\mathcal{B}\left(\frac{1}{2}, \frac{1}{2}
\right)\lVert V_\eps\rVert_{X_\eps^2}.
\end{aligned}
\end{equation*}
Thus, we assume that $V_\eps^0$ satisfies
$$\lVert V_\eps^0\rVert_{X_\eps^3}\leq \min\left\{\lambda_0,
\m\m \frac{1}{2\m C_0\m \mathcal{B}\left(\frac{1}{2}, \frac{1}{2}\right)
\max\{C_4(2,2), C_3(2,2)\}}\right\}:=\gamma_0,  $$
then $V_\eps\in L^\infty\left(0,\infty; X_\eps^2\right)$. 

Moreover, we also have
\begin{equation*}
\begin{aligned}
t^{\frac{3}{4}}\lVert V_\eps\rVert_{X_\eps^\infty}\leq C_2(2)
\lVert V_\eps^0\rVert_{X_\eps^2}+ 2C_3(\infty, 2)\mathcal{B}
\left(\frac{5}{4}, \frac{1}{2}\right)\lVert V_\eps
\rVert_{\Kscr_\eps}\lVert V_\eps\rVert_{X_\eps^2}<\infty.
\end{aligned}
\end{equation*}
By using interpolation inequality, we obtain that 
$t^{\frac{3}{2}\left(\frac{1}{2}-\frac{1}{p}\right)} V_\eps
\in L^\infty(0,\infty; X_\eps^p),$
for every $2\leq p\leq \infty$.
More precisely, with $\lVert V_\eps^0\rVert_{X_\eps^2\cap X_\eps^3}
\leq \gamma_0 $, we conclude that \rfb{spacesolu}
holds with
$$\mu_0= C_2(2)\m \gamma_0+\frac{2\m \gamma_0^2\m C_3(\infty, 2)
\m C_1(2,2)\mathcal{B}\left(\frac{5}{4}, \frac{1}{2}\right)}{1-2\gamma_0\m 
C_0\m C_3(2,2)\mathcal{B}\left(\frac{1}{2}, \frac{1}{2}\right)}.$$
Now the proof is completed.
\end{proof}

\begin{rmk}
{\rm We remark that the mild solution $V_\eps=\begin{bmatrix} 
v_\eps & l_{v_\eps} & \omega_{v_\eps}
 \end{bmatrix}^\intercal$ obtained in 
Proposition \ref{fixedpoint} is a weak solution 
of the fluid-body system \eqref{governing}, associated with the 
initial data $(v_\eps^0,\m  l_\eps^0,\m \omega_\eps^0)$, in the 
sense of Definition \ref{defv}. The proof is similar to the proof 
of the similar results in two-dimensional case (see for instance 
\cite[proof of Proposition 2.5]{takahashi2004global}) and also 
similar for the Navier-stokes system (see 
 \cite{1972FJR} and \cite{gilles2018navier}), 
after the appropriate choice of the function spaces.
}
\end{rmk}

Based on the structure of $V_\eps$ in the space $\Kscr_\eps$, 
we immediately obtain from \rfb{spacesolu} that the solid 
velocity $l_{v_\eps}=\dot h_\eps(t)$ satisfies 
$$\sup_{t>0} t^{\frac{3}{4}}|\dot h_\eps(t)|\leq \mu_0.$$
Recalling that 
$h_\eps(0)=0$, according to Leibniz formula, we have for 
every $t\in [0,T]$,
$$|h_\eps(t)|\leq \int_0^t|\dot h_\eps(\tau)|\dd \tau\leq 
\mu_0\int_0^t\tau^{-\frac{3}{4}}\dd \tau\leq 4\m\mu_0 T^{\frac{1}{4}},
$$
which means that the sequence $(h_\eps)$ is uniformly bounded.
Moreover, we see that
$(h_\eps)$ is bounded in $W^{1,q}(0, T; \rline^3)$ for 
$1<q<\frac{4}{3}$. This implies that, up to a subsequence,
$$h_\eps\rightharpoonup h \quad \text{weakly in}\quad W^{1,q}(0, T).$$
According to Morrey's inequality, we have $W^{1,q}(0, T)
\hookrightarrow C^{0,\alpha}[0, T]$ with $0<\alpha<1-\frac{1}{q}$ 
and $1<q<\frac{4}{3}$, which gives that $(h_\eps)$ is uniformly 
equi-continuous. Therefore, according to the
Arzel\`a-Ascoli theorem and the compact embedding 
$C^{0,\alpha}[0, T]\hookrightarrow C^0[0, T]$, we obtain that 
$(h_{\eps})$ converges uniformly up 
to a subsequence, i.e. 
$$h_{\eps}\toep h \quad \text{uniformly in}\quad [0,T], $$
where $h\in W^{1,q}(0, T)$ with $1<q<\frac{4}{3}$.
We used the uniqueness of the limit in the above analysis.

%%%%%%%%%%++++++++++%%%%%%%%%%++++++++++%%%%%%%%%%++++++++++%%%%%%%%%%+++
\section{Modified test function}\label{section-testfunc}

In this section we construct an approximate sequence of the 
test functions for the fluid-body system. This will be used to 
justify the limit of the solution in the remaining part. 
The main idea comes from the technique used in 
\cite{lacave2017small}.

We first define the cut-off function $\chi(x):\rline^3\longrightarrow 
[0,1]$ and $\chi\in C^\infty(\rline^3; [0,1])$ such that 
\begin{equation}\label{chi1}
\chi(x)=\left\{
\begin{aligned}
&0 \quad \text{in}\quad B\left(0,\frac{3}{2}\right),\\
&1\quad \text{in}\quad \left(B\left(0, 2\right)\right)^c.
\end{aligned}\right.
\end{equation}
Denote the annulus $\Omega_1:=B\left(0, 2\right)\setminus B
\left(0, \frac{3}{2} \right)$. Using \rfb{chi1}, we introduce 
a cut-off function $\chi_\eps(t,x)$ near the ball $B(h(t), \eps)$, for every $t>0$, as follows:
\begin{equation}\label{chi}
\chi_\eps(t,x):=\chi\left(\frac{x-h(t)}{\eps}\right)=
\left\{
\begin{aligned}
&0 \quad \text{in}\quad B\left(h(t),\frac{3}{2}\eps\right),\\
&1\quad \text{in}\quad \left(B\left(h(t), 2\eps\right)\right)^c.
\end{aligned}
\right.
\end{equation}
Similarly, we denote the annulus for $\chi_\eps$ as $\Om_\eps (t):
=B\left(h(t), 2\eps\right)\setminus B\left(h(t), \frac{3}{2}\eps\right)$. We observe that $\text{supp} (\chi_\epsilon - 1) \subseteq B\left(h(t), 2 \eps\right)$ and $\text{supp} (\nabla\chi_\eps) \subseteq \Om_\eps (t)$.
We introduce the following lemma 
for some properties of the cut-off function $\chi_\eps(t,x)$, which are frequently used in the convergence of the test function.

\begin{lem}\label{lemmachi}
For every $T>0$, the cut-off function $\chi_\eps(t,x)$ defined in \rfb{chi} satisfies
\begin{itemize}
\item $\chi_\eps\in W^{1,q}(0,T; C^\infty(\rline^3))$ for 
$1<q<\frac{4}{3}$;
\item For every $t\in (0, T)$, $\chi_\eps$ vanishes in the ball 
$B\left(h(t), \frac{3}{2}\eps\right)$;
\item There exists $C>0$, such that
$$ \lVert \chi_\eps(t, \cdot)\rVert_{L^\infty(\rline^3)} = 1, \quad
\lVert \chi_\eps(t, \cdot)-1\rVert_{L^2(\rline^3)}\leq C
\eps^{\frac{3}{2}},\quad \lVert \nabla\chi_\eps(t, \cdot)
\rVert_{L^2(\rline^3)}\leq C\eps^{\frac{1}{2}}.$$
\end{itemize}
\begin{proof}
It is clear that the regularity of $\chi_\eps$ for time 
inherits from the regularity of $h(t)$. Based on the definition 
in \rfb{chi}, $\chi_\eps$ is $C^\infty$ with respect to the 
space variable. The second one is obvious from \rfb{chi}.

For the third statement, $ \lVert \chi_\eps(t, \cdot)\rVert_{L^\infty(\rline^3)} = 1
$ is easy to check, so we do some calculation as follows for the last two estimates:
\begin{equation*}
\begin{aligned}
 \lVert \chi_\eps(t,x)-1\rVert_{L^2(\rline^3)}&=\left(\int_
 {B(h(t),2\eps)}\left|\chi\left(\frac{x-h(t)}{\eps}\right)-1
 \right|^2\dd x\right)^{\frac{1}{2}}\\
&=\left(\int_{B(0,2)}\left|\chi(y)-1\right|^2\eps^3\dd y
\right)^{\frac{1}{2}} \leq C\eps^{\frac{3}{2}},
\end{aligned}
\end{equation*}
\begin{equation*}
\begin{aligned}
\lVert\nabla\chi_\eps(t,x)\rVert_{L^2(\rline^3)}&=\left(
\int_{\Om_\eps (t)}\left|\nabla\chi\left(\frac{x-h(t)}{\eps}
\right)\right|^2\dd x\right)^{\frac{1}{2}}\\
&=\left(\int_{\Om_1}\left|\frac{1}{\eps}\nabla\chi(y)\right|
^2\eps^3\dd y\right)^{\frac{1}{2}}\leq C\eps^{\frac{1}{2}}.
\end{aligned}
\end{equation*}
\end{proof}
\end{lem}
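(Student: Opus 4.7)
The plan is to treat the three bullet points of the lemma essentially independently, relying in each case on the scaling structure $\chi_\eps(t,x)=\chi\bigl((x-h(t))/\eps\bigr)$ together with the regularity of $h$ that was established in Subsection 2.3.

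First I would address the regularity claim. Smoothness in $x$ is immediate since $\chi\in C^\infty(\rline^3)$ and the affine map $x\mapsto (x-h(t))/\eps$ is smooth. For the time regularity, I would apply the chain rule formally to write
\begin{equation*}
\partial_t \chi_\eps(t,x) = -\frac{1}{\eps}\,\dot h(t)\cdot (\nabla\chi)\!\left(\frac{x-h(t)}{\eps}\right),
\end{equation*}
and then use the bound $\dot h \in L^q(0,T)$ for $1<q<\tfrac{4}{3}$, which was obtained at the end of Subsection 2.3 as a consequence of the uniform estimate $t^{3/4}|\dot h_\eps(t)|\leq \mu_0$ and weak convergence. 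Since $\nabla\chi$ is bounded and $C^\infty$ in space, this proves $\chi_\eps\in W^{1,q}(0,T;C^\infty(\rline^3))$.

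The second bullet is immediate from the definition \rfb{chi}: on $B(h(t),\tfrac{3}{2}\eps)$ the argument $(x-h(t))/\eps$ lies in $B(0,\tfrac{3}{2})$, where $\chi$ vanishes by \rfb{chi1}. For the three estimates in the third bullet, the plan is a simple change of variables $y=(x-h(t))/\eps$, which gives $\dd x=\eps^3\,\dd y$. The $L^\infty$ bound is trivial because $0\leq \chi\leq 1$ and $\chi=1$ outside $B(0,2)$. For the $L^2$ bound of $\chi_\eps-1$, I note that $\chi_\eps-1$ is supported in $B(h(t),2\eps)$ and compute
\begin{equation*}
\|\chi_\eps(t,\cdot)-1\|_{L^2(\rline^3)}^2 = \int_{B(h(t),2\eps)}\bigl|\chi\bigl((x-h(t))/\eps\bigr)-1\bigr|^2\dx = \eps^3\int_{B(0,2)}|\chi(y)-1|^2\dy,
\end{equation*}
which yields the $\eps^{3/2}$ rate. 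For the gradient, I would observe that $\nabla\chi_\eps$ is supported in the annulus $\Om_\eps$, and that the $\eps^{-1}$ coming from the chain rule combines with the $\eps^3$ Jacobian to give $\eps^{1/2}$.

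This is a rather mechanical scaling argument, so I do not anticipate a genuine obstacle; the only mildly delicate point is matching the correct regularity class for the time variable, for which I must invoke the precise outcome $h\in W^{1,q}(0,T)$ with $1<q<\tfrac{4}{3}$ coming from Proposition \ref{fixedpoint} rather than the cruder $C^{0,\alpha}$ regularity derived afterwards via Morrey's embedding.
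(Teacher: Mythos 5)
Your proposal is correct and follows essentially the same approach as the paper. The change-of-variables scaling computation for the third bullet is identical, and your explicit chain-rule formula $\partial_t\chi_\eps(t,x)=-\eps^{-1}\dot h(t)\cdot(\nabla\chi)\bigl((x-h(t))/\eps\bigr)$ combined with $\dot h\in L^q(0,T)$, $1<q<\frac{4}{3}$, is precisely the argument the paper compresses into the one-line remark that the time regularity of $\chi_\eps$ ``inherits from the regularity of $h(t)$.''
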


\begin{prop}\label{Prop_test-func}
Let $T>0$, assume that $\varphi\in C_c^\infty([0, T)\times\rline^3)$ 
with $\div \varphi=0$. For every $\eps>0$, there exists 
$\varphi_\eps\in W_c^{1,q}([0, T); H^1(\rline^3))$ with 
$1<q<\frac{4}{3}$ satisfying
\begin{equation}\label{divfree}
\div \varphi_\eps=0 \quad \text{in}\quad (0, T)\times \rline^3,
\end{equation}
\begin{equation}\label{inner}
\varphi_\eps\equiv 0\quad t\in [0, T), \quad x\in B\left(h(t), 
\frac{3}{2}\eps\right),
\end{equation}
\begin{equation}\label{convergphi}
\varphi_\eps\toep \varphi\quad \text{strongly in}\quad 
L^\infty(0,T; H^1(\rline^3)),
\end{equation}
and
\begin{equation}\label{timeconv}
\prt_t{\varphi_\eps}\toep \prt_t\varphi\quad \text{strongly in}
\quad L^q(0, T; L^2(\rline^3)).
\end{equation}
\end{prop}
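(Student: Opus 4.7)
The natural strategy is the classical one used in e.g. Lacave--Takahashi \cite{lacave2017small}: first truncate $\varphi$ by the cut-off $\chi_\eps$ to force vanishing in a neighbourhood of the body, then restore the divergence-free condition by subtracting a Bogovski\u{\i}-type corrector supported in the small annulus $\Om_\eps$. I would proceed as follows.

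First I would set $\widetilde\varphi_\eps(t,x):=\chi_\eps(t,x)\,\varphi(t,x)$, so that \eqref{inner} is automatic and $\div\widetilde\varphi_\eps=\nabla\chi_\eps\cdot\varphi=:g_\eps$ is supported in $\Om_\eps$. Because $\div\varphi=0$, integration by parts gives $\int_{\rline^3}g_\eps\,\dx=-\int\chi_\eps\div\varphi\,\dx=0$ for every $t$, so $g_\eps(t,\cdot)$ lies in the kernel of the mean on $\Om_\eps$ and is in the range of the divergence. On the fixed reference annulus $\Om_1$ I would invoke Bogovski\u{\i}'s operator $\mathcal B_1:L^2_0(\Om_1)\to H^1_0(\Om_1)$ and define, via rescaling,
\[
f_1(t,y):=(\nabla\chi)(y)\cdot\varphi(t,h(t)+\eps y),\qquad W_1(t,\cdot):=\mathcal B_1 f_1(t,\cdot),
\]
and then $w_\eps(t,x):=W_1\!\left(t,\tfrac{x-h(t)}{\eps}\right)$. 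A direct computation gives $\div_x w_\eps=g_\eps$, so $\varphi_\eps:=\widetilde\varphi_\eps-w_\eps$ satisfies \eqref{divfree}, while $w_\eps$ is supported in $\Om_\eps$, keeping \eqref{inner} intact.

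Next I would check the scaling-invariant bounds. Since $\|f_1(t,\cdot)\|_{H^1(\Om_1)}$ is bounded uniformly in $t$ and $\eps$ (using $\varphi\in C_c^\infty$), continuity of $\mathcal B_1$ yields $\|W_1(t,\cdot)\|_{H^1(\Om_1)}\le C$, and the change of variables $y=(x-h)/\eps$ then gives $\|w_\eps\|_{L^2(\rline^3)}\le C\eps^{3/2}$ and $\|\nabla w_\eps\|_{L^2(\rline^3)}\le C\eps^{1/2}$, both uniformly in $t\in[0,T]$. Combined with the estimates of Lemma \ref{lemmachi}, this yields $\widetilde\varphi_\eps\to\varphi$ in $L^\infty(0,T;H^1)$ and $w_\eps\to 0$ in $L^\infty(0,T;H^1)$, proving \eqref{convergphi}.

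For the time derivative I would differentiate $w_\eps$ as $\partial_t w_\eps=(\partial_t W_1)(t,(x-h)/\eps)-\tfrac{\dot h(t)}{\eps}\cdot(\nabla_y W_1)(t,(x-h)/\eps)$, where $\partial_t W_1=\mathcal B_1(\partial_t f_1)$ with $|\partial_t f_1|\le C(1+|\dot h(t)|)$. The scaling then gives
\[
\|\partial_t w_\eps(t,\cdot)\|_{L^2(\rline^3)}\le C\,\eps^{1/2}\bigl(1+|\dot h(t)|\bigr),
\]
and since $\dot h\in L^q(0,T)$ for every $1<q<4/3$ by the uniform analysis of Section \ref{subsection-unif3}, integrating the $q$-th power in time gives $\partial_t w_\eps\to 0$ in $L^q(0,T;L^2)$. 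Similarly $\partial_t\widetilde\varphi_\eps-\partial_t\varphi=(\chi_\eps-1)\partial_t\varphi-\dot h\cdot\nabla\chi_\eps\,\varphi$, whose $L^2$-norm is bounded by $C\eps^{3/2}+C\eps^{1/2}|\dot h(t)|$, and the same argument yields the convergence \eqref{timeconv}. The regularity $\varphi_\eps\in W^{1,q}([0,T);H^1(\rline^3))$ with compact time support inherits from $\varphi$ and from $h\in W^{1,q}(0,T)$.

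The main obstacle I foresee is the low time regularity of $h$: because the uniform estimate only gives $\dot h\in L^q$ with $q<4/3$, one cannot hope for convergence of $\partial_t\varphi_\eps$ better than $L^q(0,T;L^2)$, and this forces us to track explicit powers of $\eps$ against $|\dot h|$ in every term that carries $\partial_t\chi_\eps$ or $\partial_t w_\eps$. The scaling of the Bogovski\u{\i} corrector on $\Om_\eps$ is designed precisely so that the singular factor $1/\eps$ arising from $\dot h\cdot\nabla_y W_1/\eps$ is compensated by the $\eps^{3/2}$ coming from the volume of $\Om_\eps$, leaving a factor $\eps^{1/2}(1+|\dot h|)$ which is integrable to the $q$-th power. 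Everything else reduces to applying Lemma \ref{lemmachi} and the standard continuity of Bogovski\u{\i} on the reference annulus.
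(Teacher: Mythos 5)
Your proposal is essentially the paper's own proof: the same Bogovski\u{\i}-on-the-reference-annulus construction (the paper's $\tilde g_\eps$ and $g_\eps$ are your $W_1$ and $w_\eps$, and its $\tilde\varphi_\eps$ is your $f_1$), the same rescaling bookkeeping giving $\eps^{3/2}$ and $\eps^{1/2}$, and the same use of $\dot h\in L^q$ with $q<4/3$ to close the $L^q(0,T;L^2)$ convergence of the time derivative. Your tracking of the $\dot h$ factor in the $\varphi\,\partial_t\chi_\eps$ term is in fact slightly more careful than the paper's displayed computation, though this does not change the conclusion.
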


\begin{proof}
With the cut-off $\chi$ defined in \rfb{chi1}, we introduce
\begin{equation*}\label{tildephi}
\tilde \varphi_\eps(t,y):=\varphi(t,h(t)+\eps y)\cdot\nabla\chi(y).
\end{equation*}
It is not difficult to see that $\tilde \varphi_\eps(t,y)\in 
W^{1,q}_c([0, T); L^2(\Omega_1))$ and $\tilde\varphi_\eps$ satisfies 
zero-mean property on $\Omega_1$ ($\Om_1$ has been introduced in \rfb{chi1}). Indeed, we have
\begin{equation*}
\begin{aligned}
\int_{\Om_1}\tilde\varphi_\eps(t,y)\dd y&=\int_{\Om_1}\varphi(t, 
h(t)+\eps y)\cdot\nabla\chi(y)\dd y\\
&=\int_{\prt B(0,2)}\varphi(t,h(t)+\eps y)\cdot n\dd s
=\int_{B(0,2)}\div\varphi(t, h(t)+\eps y)\dd y =0,
\end{aligned}
\end{equation*}
where we used $\div\varphi=0$ and the definition of $\chi$ in 
\rfb{chi1}. In this case, the problem 
\begin{equation*}\label{problem-gtlide}
\left\{
\begin{aligned}
&\m\div \tilde g_\eps=\tilde \varphi_\eps\quad \text{in}\quad 
\Omega_1,\\
&\m\tilde g_\eps=0\quad \text{at}\quad \prt\Omega_1,
\end{aligned}
\right.
\end{equation*}
has a solution $\tilde g_\eps\in W^{1,q}(0, T; H_0^1(\Omega_1))$ 
and there exists $C>0$, such that $\tilde g_\eps(t, y)$ satisfies
\begin{equation}\label{tildeineq1}
\lVert\tilde g_\eps\rVert_{L^\infty(0,T; H^1(\Om_1))}\leq C
\lVert\tilde \varphi_\eps\rVert_{L^\infty(0,T; L^2(\Om_1))},
\vspace{+1.5mm}
\end{equation}
\begin{equation}\label{tildeineq2}
\lVert\prt_t\tilde g_\eps\rVert_{L^q(0, T;H^1(\Om_1)) }
\leq C\lVert\prt_t\tilde \varphi_\eps\rVert_{L^q(0, T; L^2(\Om_1))}.
\end{equation}
For the above inequalities, please refer to, for instance, 
\cite[Theorem III.3.1 and Exercise III.3.6]{galdi2011introduction} 
for more details. 

Consider zero-extension of $\tilde g_\eps$ on $\rline^3$ and define 
\begin{equation}\label{phiepss}
\varphi_\eps(t,x):=\varphi(t,x)\chi_\eps(t,x)
-g_\eps(t,x),
\end{equation}
with 
$$g_\eps(t,x):=\tilde g_\eps\left(t, \frac{x-h(t)}{\eps}\right),  $$
where the function $\chi_\eps$ has been introduced in \rfb{chi}.
After doing change of variables $y=\frac{x-h(t)}{\eps}$, we find that 
$$\lVert\tilde g_\eps(t,y)\rVert_{L^2(\rline^3)}=
\eps^{-\frac{3}{2}}\lVert g_\eps(t,x)\rVert_{L^2(\rline^3)}, $$
$$\lVert\nabla_y\tilde g_\eps(t,y)\rVert_{L^2(\rline^3)}=
\eps^{-\frac{1}{2}}\lVert \nabla_x g_\eps(t,x)\rVert_{L^2(\rline^3)}.$$
Moreover, we have
$$ \lVert\tilde \varphi_\eps(t,y)\rVert_{L^2(\Om_1)}\leq 
\lVert\varphi\rVert_{L^\infty(\Om_1)}\lVert\nabla\chi\rVert_{L^2(\Om_1)}
\leq C\lVert\varphi\rVert_{L^\infty(\rline^3)}.$$
We obtain from \rfb{tildeineq1} that
\begin{equation}\label{ineqvarphi}
\eps^{-1}\lVert g_\eps\rVert_{L^\infty(0, T;L^2(\rline^3))}
+\lVert\nabla g_\eps\rVert_{L^\infty(0, T; L^2(\rline^3))}
\leq C\eps^{\frac{1}{2}}\lVert\varphi\rVert_{L^\infty((0, T)
\times \rline^3)}.
\end{equation}

Now, we consider the difference $\varphi_\eps-\varphi$. 
Using the formula of $\varphi_\eps$ in \rfb{phiepss} and 
the inequality \rfb{ineqvarphi}, we have
\begin{equation*}
\begin{aligned}
&\eps^{-1}\lVert\varphi_\eps-\varphi\rVert_{L^\infty(0, T; 
L^2(\rline^3))}+\left\lVert\nabla\varphi_\eps-\nabla\varphi
\right\rVert_{L^\infty(0, T; L^2(\rline^3))}\\
&\leq C\eps^{\frac{1}{2}}\lVert\varphi\rVert_{L^\infty((0, 
T)\times \rline^3)}+\eps^{-1}\left\lVert\varphi\left(\chi_\eps
-1\right)\right\rVert_{L^\infty(0, T; L^2(\rline^3))}\\
&\qquad +\left\lVert\nabla\left(\varphi\chi_\eps-\varphi\right)
\right\rVert_{L^\infty(0, T; L^2(\rline^3))}.
\end{aligned}
\end{equation*}
By using the properties in Lemma \ref{lemmachi}, we estimate 
the last two terms on the above expression:
$$ \eps^{-1}\lVert\varphi(\chi_\eps-1)\rVert_{L^\infty(0, T; 
L^2(\rline^3))}\leq C\eps^{\frac{1}{2}}\lVert\varphi\rVert_
{L^\infty((0, T)\times \rline^3)},$$
\begin{equation*}
\begin{aligned}
\left\lVert\nabla\left(\varphi\chi_\eps-\varphi\right)\right
\rVert_{L^\infty(0, T; L^2(\rline^3))} &=\lVert\nabla\varphi
(\chi_\eps-1)+\varphi\nabla\chi_\eps\rVert_{L^\infty(0, T; 
L^2(\rline^3))}\\
&\leq C\eps^{\frac{3}{2}}\lVert \nabla\varphi\rVert_{L^\infty
((0, T)\times \rline^3)}+C\eps^{\frac{1}{2}}\lVert\varphi
\rVert_{L^\infty((0, T)\times \rline^3)}\\
&\leq C\eps^{\frac{1}{2}}\lVert\varphi\rVert_{W^{1, \infty}
((0, T)\times \rline^3)}.
\end{aligned}
\end{equation*}
Hence, we conclude that
\begin{equation*}\label{differencevarphi}
\eps^{-1}\lVert\varphi_\eps-\varphi\rVert_{L^\infty(0, T; 
L^2(\rline^3))}+\left\lVert\nabla\varphi_\eps-\nabla\varphi
\right\rVert_{L^\infty(0, T; L^2(\rline^3))}
\leq C\eps^{\frac{1}{2}}\lVert \varphi\rVert_{W^{1, \infty}
((0, T)\times \rline^3)},
\end{equation*}
which implies directly that 
$$\varphi_\eps\longrightarrow \varphi \quad \text{strongly 
in }\quad L^\infty(0, T; H^1(\rline^3)).$$

For the time derivative, using \rfb{phiepss} we compute:
\begin{equation}\label{timeprt}
\begin{aligned}
\prt_t\varphi_\eps-\prt_t\varphi&=\chi_\eps\prt_t\varphi+
\varphi\nabla\chi_\eps-\prt_t g_\eps(t,x)+\nabla\tilde 
g_\eps(t,y)\left(-\frac{1}{\eps}\dot h(t)\right)-\prt_t\varphi\\
&=\prt_t\varphi\m(\chi_\eps-1)+\varphi\nabla\chi_\eps-\dot 
h(t)\nabla g_\eps(t,x)-\prt_t g_\eps\left(t,x\right).
\end{aligned}
\end{equation}
We estimate the terms on the right hand side of \rfb{timeprt} 
one by one:
$$ \lVert\prt_t\varphi(\chi_\eps-1)\rVert_{L^2(\rline^3)}\leq \lVert\prt_t\varphi\rVert_{L^\infty(\rline^3)}\lVert\chi_\eps-
1\rVert_{L^2(\rline^3)}\leq C\eps^{\frac{3}{2}} \lVert\prt_t
\varphi\rVert_{L^\infty(\rline^3)},$$
$$ \lVert\varphi\nabla\chi_\eps\rVert_{L^2(\rline^3)}\leq C
\eps^{\frac{1}{2}}\lVert\varphi\rVert_{L^\infty(\rline^3)},$$
$$ \lVert  \dot h(t) \nabla g_\eps\rVert_{L^2(\rline^3)}\leq C 
\eps^{\frac{1}{2}}\m |\dot h(t)|\m\lVert\varphi\rVert_{L^
\infty(\rline^3)},  $$
\begin{equation*}\label{esti_g_eps}
\lVert \prt_t g_\eps(t, x)\rVert_{L^2(\rline^3)}\leq \eps^{\frac{3}{2}}\lVert\prt_t\tilde g_\eps\rVert_{L^2(\rline^3)},
\end{equation*}
where we used \rfb{ineqvarphi} and Lemma \ref{lemmachi}. 
Combining with all estimates above, we obtain from \rfb{timeprt} that 
\begin{equation*}
\lVert\prt_t\varphi_\eps-\prt_t\varphi\rVert_{
L^2(\rline^3)}\leq\m C\eps^{\frac{3}{2}}\lVert\prt_t\varphi
\rVert_{L^\infty( \rline^3)}
+ C\eps^{\frac{1}{2}}\lVert\varphi\rVert_{L^\infty( \rline^3)}
+C\eps^{\frac{1}{2}}| \dot h(t)|\m\lVert\varphi\rVert_{L^\infty( \rline^3)}+\eps^{\frac{3}{2}}\lVert\prt_t\tilde g_\eps\rVert_{L^2(\rline^3)}.
\end{equation*}
Taking the $L^q$ norm with respect to time $t$ on $(0, T)$ and using the embedding $L^\infty(0, T)\hookrightarrow L^q(0, T)$ for $1<q<\frac{4}{3}$, we have
\begin{equation}\label{middes}
\lVert\prt_t\varphi_\eps-\prt_t\varphi\rVert_{L^q(0, T; 
L^2(\rline^3))}\leq C\eps^{\frac{1}{2}}\lVert\varphi\rVert_{W^{1, \infty}
((0, T)\times \rline^3)}+C\m\eps^{\frac{3}{2}}\lVert\prt_t\tilde g_\eps\rVert_{L^q(0, T;L^2(\rline^3))}.    
\end{equation}
Moreover, note that 
\begin{equation}\label{mid2}
  \lVert\prt_t\tilde \varphi_\eps\rVert_{L^2(\rline^3)}=\lVert 
\nabla\chi\m\prt_t\varphi\rVert_{L^2(\rline^3)}\leq \lVert\prt_t
\varphi\rVert_{L^\infty(\rline^3)}\lVert\nabla\chi
\rVert_{L^2(\rline^3)}\leq C\lVert\prt_t\varphi\rVert_{L^\infty
(\rline^3)}.  
\end{equation} 
Putting \rfb{tildeineq2}, \rfb{middes} and \rfb{mid2} together, we derive that
$$\lVert\prt_t\varphi_\eps-\prt_t\varphi\rVert_{L^q(0, T; 
L^2(\rline^3))}\leq C\eps^{\frac{1}{2}}\lVert\varphi\rVert_{W^{1, \infty}
((0, T)\times \rline^3)}. $$
This immediately gives that
$$ \prt_t\varphi_\eps\longrightarrow \prt_t\varphi\quad 
\text{strongly in}\quad L^q(0, T; L^2(\rline^3)).$$

Finally, according to the structure of $\varphi_\eps$ in 
\rfb{phiepss}, it is obvious to see that \rfb{divfree} 
and \rfb{inner} hold, which ends the proof.
\end{proof}

\begin{rmk}
{\rm Compared with the result in \cite{lacave2017small}, 
here we have strong convergence of the approximate sequence 
$(\varphi_\eps)$ in $L^\infty(0, T; H^1(\rline^3))$ 
and $L^q(0, T; L^2(\rline^3))$. This benefits from that 
we consider the convergence in $L^2(\rline^3)$, 
rather than in $L^3(\rline^3)$, which is 
to match the regularity of the weak solution for the 
fluid-body system and for the Navier-Stokes equations.
}
\end{rmk}

\begin{rmk}\label{remark_varphi}
{\rm We claim that the sequence $(\varphi_\eps)$ constructed 
in Proposition \ref{Prop_test-func} is an admissible test 
functions for fluid-body system. Indeed, we know from the 
strong convergence $h_\eps\longrightarrow h$ that there 
exists $\delta_0>0$, such that for every $\eps<\delta_0$, 
we have $|h_\eps-h|\leq \frac{1}{2}\eps$. In this case, 
we obtain from \rfb{inner} that 
$$ \varphi_\eps\equiv 0\quad \text{in} \quad B(h_\eps(t), 
\eps).$$
}
\end{rmk}

%%%%%%%%%%++++++++++%%%%%%%%%%++++++++++%%%%%%%%%%++++++++++%%%%%%%%%%+++

\section{Strong convergence}\label{section-strong}
In order to pass to the limit for the non-linear term, we derive in this part the strong convergence of some sub-sequence of $(u_{\eps})$ in $L^2(0,T;L^2\loc(\rline^3))$. Precisely, we prove the following result. 
\begin{prop}\label{prop_conv}
There exists a sub-sequence $(u_{\eps_k})$ of $(\ue)$ which converges strongly in $L^2(0,T;L^2_{\rm loc} (\rline^3))$. 
\end{prop}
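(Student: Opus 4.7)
The plan is to apply an Aubin--Lions-type compactness argument on space-time regions staying at positive distance from the limiting trajectory $\Gamma := h([0,T])$, and to control the contribution of the complementary tubular neighborhood via the higher integrability of $u_\eps$ provided by the energy bound. The energy estimate \eqref{energyineq} combined with the Sobolev embedding $H^1(\rline^3)\hookrightarrow L^6(\rline^3)$ gives the uniform control $\|u_\eps\|_{L^\infty(0,T;L^2(\rline^3))\cap L^2(0,T;L^6(\rline^3))}\leq C$, and since Section~\ref{subsection-unif3} yields $h\in W^{1,q}(0,T)$ for some $q>1$, the compact set $\Gamma$ has finite one-dimensional Hausdorff measure and in particular vanishing Lebesgue measure in $\rline^3$.

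For a fixed compact $K\subset\rline^3$ and $\delta>0$, set $U_\delta := \{y\in\rline^3 : \dist(y,\Gamma)<\delta\}$. On the one hand, $|K\cap U_\delta|\to 0$ as $\delta\to 0$, and H\"older's inequality yields
\begin{equation*}
\int_0^T\!\!\int_{K\cap U_\delta}|u_\eps - u_{\eps'}|^2\,\dd y\,\dd s \leq |K\cap U_\delta|^{2/3}\left(\|u_\eps\|_{L^2(0,T;L^6)}^2 + \|u_{\eps'}\|_{L^2(0,T;L^6)}^2\right) \leq C\,|K\cap U_\delta|^{2/3},
\end{equation*}
uniformly in $\eps,\eps'$. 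On the other hand, the uniform convergence $h_\eps\to h$ in $C^0([0,T])$ from Section~\ref{subsection-unif3} guarantees that for $\eps$ small enough one has $\Sscr_\eps(s)\subset U_{\delta/2}$ for every $s\in[0,T]$. Consequently every divergence-free $\varphi\in C_c^\infty((0,T)\times(\rline^3\setminus\overline{U_{\delta/2}}))$ is admissible in Definition~\ref{defu}, and since $\rho_\eps\equiv 1$ on its support the weak formulation collapses to the classical Navier--Stokes identity. Interpolating $u_\eps\in L^\infty(L^2)\cap L^2(L^6)$ gives $u_\eps\otimes u_\eps$ bounded in $L^{5/3}((0,T)\times\rline^3)$, which combined with the $L^2(H^1)$ bound on $u_\eps$ yields a uniform bound on $\partial_s u_\eps$ in the dual of a suitable space of divergence-free test functions supported in $\rline^3\setminus\overline{U_{\delta/2}}$. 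The Aubin--Lions lemma then produces relative compactness of $(u_\eps)$ in $L^2(0,T;L^2(K\setminus U_\delta))$.

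A diagonal extraction over $\delta_n = 1/n$ (and, to obtain $L^2\loc$ convergence on all of $\rline^3$, over $K_m=\overline{B(0,m)}$) then delivers a single subsequence $(u_{\eps_k})$ that is Cauchy in $L^2(0,T;L^2(K_m\setminus U_{\delta_n}))$ for every $n,m$; combined with the uniform $O(|K_m\cap U_{\delta_n}|^{2/3})$ remainder, this forces $(u_{\eps_k})$ to be Cauchy, hence convergent, in $L^2(0,T;L^2(K_m))$ for every $m$, which is precisely the desired strong convergence. The main obstacle in this plan is the $\partial_s u_\eps$ estimate, since the weak formulation only delivers it against divergence-free test functions: producing admissible divergence-free test functions confined to $\rline^3\setminus\overline{U_{\delta/2}}$ requires a Bogovskii-type correction on the spherical shell surrounding $\Gamma$, in the same spirit as the construction already carried out in the proof of Proposition~\ref{Prop_test-func}.
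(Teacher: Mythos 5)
Your approach is genuinely different from the paper's. The paper works globally: it defines a time-dependent linear functional $U_\eps(t)$ by pairing $u_\eps$ with the modified test functions $\varphi_\eps$ from Section~\ref{section-testfunc}, shows that $\mathbb{P}\m U_\eps$ is uniformly bounded and H\"older equi-continuous in $H^{-2}(\rline^3)$, applies Arzel\`a--Ascoli together with the compact embedding $H^{-2}\hookrightarrow H^{-3}_{\rm loc}$, then shows $\mathbb{P}\m U_{\eps_k}-u_{\eps_k}\to 0$ in $L^\infty(0,T;H^{-2})$ and finishes by interpolating against the $L^2(0,T;H^1)$ bound. You instead propose a spatial decomposition near/away from the trajectory $\Gamma$, absorbing the tubular neighborhood by H\"older and $L^2(L^6)$, and applying Aubin--Lions on the exterior. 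This is a valid alternative architecture (close in spirit to \cite{he2021small}), and your treatment of the near-$\Gamma$ contribution and the diagonal extraction are correct. What each route buys: the paper's construction handles the moving obstacle once and for all through $\varphi_\eps$ and needs no decomposition of $\rline^3$; yours avoids the modified test function entirely on the complement of $U_{\delta/2}$, at the price of the near-$\Gamma$ remainder term.

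However, the step you yourself flag as the main obstacle is both real and not quite correctly diagnosed. Divergence-free fields compactly supported in $\rline^3\setminus\overline{U_{\delta/2}}$ are plentiful and, for $\eps$ small enough, automatically admissible in Definition~\ref{defu} since $\Sscr_\eps(t)\subset U_{\delta/2}$; no Bogovskii correction is needed to \emph{produce} such test functions. The genuine difficulty is the opposite one: the weak formulation only controls $\partial_s u_\eps$ paired with \emph{divergence-free} fields, so you do not obtain a bound on $\partial_s u_\eps$ in a standard negative Sobolev space on $K\setminus U_\delta$ (that would require controlling the pressure $p_\eps$, which is not available here). Attempting to extend the pairing to arbitrary $\varphi$ via a Bogovskii-type decomposition $\varphi=\varphi_{\rm div\text{-}free}+g$ only trades the problem for the uncontrolled term $\langle\partial_s u_\eps, g\rangle$. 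The correct repair is to keep the estimate in the form of equi-continuity of $t\mapsto\int u_\eps(t)\cdot\varphi$ for divergence-free $\varphi$, pass through Arzel\`a--Ascoli in a localized $H^{-m}_\sigma$ space, and then interpolate with the $L^2(H^1)$ bound --- which is precisely the mechanism the paper uses globally. So your plan is sound in outline, but the Aubin--Lions step as written is a genuine gap: it needs to be replaced by the equi-continuity/Arzel\`a--Ascoli/interpolation chain rather than a black-box Aubin--Lions with a Bogovskii fix.
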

\begin{proof}
We derive a time estimate and use the Arzel\`a-Ascoli lemma to get a strong convergence. By the definition of the test function constructed in Section \ref{section-testfunc} (see \eqref{phiepss}), we find that even if $\varphi$ does not depend on time variable, the modified test function $\varphi_\eps$ still depends on. Let $\varphi \in C^\infty_c (\rline^3)$ be a test function with $\div \varphi =0$, we construct a new test function $\varphi_\eps$ as in Section \ref{section-testfunc},
\begin{equation}\label{phiepss-strong}
\varphi_\eps(t,x):=\varphi(x)\chi_\eps(t,x)
-g_\eps(t,x),
\end{equation}
where the function $\chi_\eps$ and $g_\eps$ has been introduced in \rfb{chi} and \rfb{phiepss}, respectively. It is not difficult to see that $\varphi_\eps (t, x)$ defined in \rfb{phiepss-strong} satisfies the properties of Proposition \ref{Prop_test-func}.

We first estimate the following term:
\begin{align*}
\Bigl| \int_{\rline^{3}}\ue(t,x)\cdot \varphi_\eps(t,x) \,\dd x\Bigr|
&= \Bigl|\int_{\rline^{3}}\ue\cdot \left(\varphi(x)\chi_\eps(t,x)
-g_\eps(t,x) \right)
 \,\dd x\Bigr|\\
&\leq \norm{\ue}_{L^2} \norm{\varphi}_{L^2} \norm{\chi_\eps}_{L^\infty} 
+ \norm{\ue}_{L^2} \norm{g_\eps}_{L^2} \\
& \leq C \left( \norm{\ue}_{L^2} \norm{\varphi}_{L^2} + \eps^{\frac{3}{2}} \norm{\ue}_{L^2}  \norm{\varphi}_{L^\infty} \right),
\end{align*}
where we used Lemma \ref{lemmachi} and estimate \eqref{ineqvarphi}. Then by the Sobolev embedding $H^2 \hookrightarrow L^\infty$ in 3D, we get that 
\begin{align*}
\Bigl| \int_{\rline^{3}}\ue(t,x)\cdot \varphi_\eps(t,x) \,\dd x\Bigr|
\leq C \norm{\ue}_{L^2} \norm{\varphi}_{H^2}
\leq C_1 \norm{\varphi}_{H^2},
\end{align*}
where we used the boundedness of $\ue$ in $L^\infty(0, T; L^2(\rline^3))$.

Based on the above estimate we find that, for fixed $t$ and for any $\varphi \in C^\infty_{c}(\rline^3)$ with divergence free condition, the map
\begin{equation*}
\varphi \longmapsto \int_{\rline^{3}}\ue(t,x)\cdot\varphi_\eps(t,x)\,\dd x\in\rline^3
\end{equation*}
is linear and continuous with respect to the $H^2$-norm. Therefore, according to the Lax-Milgram theorem, there exists some function $U_\eps(t)\in (H_{\sigma}^{2}(\rline^3))'$, such that
\begin{equation*}
\left\langle U_\eps(t), \varphi\right\rangle= \int_{\rline^{3}}\ue(t,x)\cdot\varphi_\eps(t,x)\,\dd x \FORALL \varphi \in H_\sigma^2(\rline^3). 
\end{equation*}
Based on this notation, we note that for $\psi\in H^2(\rline^3)$ with $\mathbb{P}\psi\in H^2_\sigma(\rline^3)$
$$|\left\langle \mathbb{P} U_\eps, \psi \right\rangle|=|\left\langle U_\eps, \mathbb{P}\psi\right\rangle|=\left|\int_{\rline^3}u_\eps\cdot (\mathbb{P}\psi)_\eps\dx \right|\leq C\lVert\mathbb{P}\psi\rVert_{H^2}\leq C\lVert\psi\rVert_{H^2}.$$
{\color{black} This gives us that
\begin{equation}\label{boundpsiep}
\| \mathbb{P} U_\eps(t)\|_{H_\sigma^{-2}}\leq C_1\FORALL t\geq0,
\end{equation}
where $\mathbb{P}$ denotes the usual Leray projector in $\rline^3$, i.e. the $L^2$-orthogonal projection on the subspace of divergence-free vector fields. }

Recalling Remark \ref{remark_varphi}, for small $\eps>0$, the function $\varphi_\eps(t,x)$ constructed in \rfb{phiepss-strong} is an admissible test functions for the fluid-body system. 
Thereby, using the definition of $\langle U_\eps(t),\varphi\rangle$, we obtain from the weak formulation of $\ue$ on the time interval $(s,t)$ that 
\begin{equation}\label{Udif}
\begin{aligned}
& \langle U_\eps(t)-U_\eps(s),\varphi\rangle \\
= & \int_{\rline^{3}}\ue(t,x)\cdot\varphi_\eps(t,x)\,\dd x - \int_{\rline^{3}}\ue(s,x)\cdot\varphi_\eps(s,x)\,\dd x \\
= & \int_s^t\int_{\rline^{3}}\ue\cdot \partial_\tau\varphi_\eps +\int_s^t\int_{\rline^{3}}\left(\ue\cdot \nabla\right)\ue\cdot\varphi_\eps-2 \nu\int_s^t\int_{\rline^{3}}D(\ue):D(\varphi_\eps). \\
\end{aligned}
\end{equation}
We shall bound the three terms in the right-hand side above. 
Since there is $\nabla \varphi_\eps$ on the right side of \rfb{Udif}, we need $H^1$-estimate of $\varphi_\eps$. 
According to Proposition \ref{Prop_test-func}, we know that $\varphi_\eps$ converges strongly to $\varphi$ in $L^\infty(0,T; H^1(\rline^3))$, which implies that
\begin{equation}\label{varphi_H1}
\norm{\varphi_\eps}_{L^\infty (0, T; H^1 )} \leq \norm{\varphi}_{H^1} 
 + \norm{\varphi_\eps - \varphi}_{L^\infty (0, T; H^1)} \leq C \norm{\varphi}_{H^2}. 
\end{equation}
Thus, by using H\"older's inequality we have 
\begin{equation*}
\begin{aligned}
\left|\nu\int_s^t\int_{\rline^{3}}D(\ue):D(\varphi_\eps)\right|
& \leq \nu \int_s^t \norm{D(\ue)}_{L^2} \norm{D(\varphi_\eps)}_{L^2} \\
& \leq C \nu(t-s)^{\frac{1}{2}} \norm{\varphi}_{H^{2}}  \norm{ \ue}_{L^{2}(0,T; H^1)} \\ 
& \leq C\nu (t-s)^{\frac{1}{2}}\norm{\varphi}_{H^2},
\end{aligned}
\end{equation*}
where we used \rfb{varphi_H1} and the boundedness of $\ue$ in $L^{2}(0,T; H^1(\rline^3))$.
Then we treat the nonlinear term in \eqref{Udif}. Using the Sobolev embedding $H^1\hookrightarrow L^6$ and the Gagliardo-Nirenberg inequality $\norm{f}_{L^{3}} \leq C \norm{f}_{L^{2}}^{\frac{1}{2}} \norm{\nabla f}_{L^{2}}^{\frac{1}{2}}$, we bound 
\begin{equation*}
\begin{aligned}
\left|\int_{s}^{t} \int_{\rline^{3}} \left(\ue \cdot \nabla\right) \ue \cdot \varphi_{\eps}\right| 
&\leq \int_{s}^{t} \norm{\ue}_{L^{3}}  \norm{\nabla \ue}_{L^{2}} \norm{\varphi_{\eps}}_{L^6} \\
&\leq C \int_{s}^{t} \norm{\ue}_{L^{2}}^{\frac{1}{2}} \norm{\nabla \ue}_{L^{2}}^{\frac{3}{2}} \norm{\varphi_{\eps}}_{H^1}.
\end{aligned}
\end{equation*}
Hence, by H\"older's inequality in time, we get that
\begin{align*}
\left|\int_{s}^{t} \int_{\rline^{3}} \left(\ue \cdot \nabla\right) \ue \cdot \varphi_{\eps}\right|  
& \leq C (t-s)^{\frac{1}{4}} \norm{\ue}^{\frac{1}{2}}_{L^{\infty}(0,T;L^2)} \norm{\ue}^{\frac{3}{2}}_{L^{2}(0,T;H^1)} \norm{\varphi_{\eps}}_{L^\infty(0, T; H^1)}\\
& \leq  C (t-s)^{\frac{1}{4}} \norm{\varphi}_{H^2},
\end{align*}
where we used the estimate \eqref{varphi_H1} and the boundedness of $\ue$ in $L^{\infty}(0,T;L^2) \cap L^{2}(0,T;H^1)$. 

Now, we consider the most complicated term, i.e. the term with time-derivative in the right-hand side of \eqref{Udif}. Recalling the construction of $\varphi_\eps$ (see \eqref{timeprt} for the explicit expression of $\prt_\tau\varphi_\eps$), we have 
\begin{equation}\label{timevarin}
\begin{aligned}
\int_s^t\int_{\rline^{3}}\ue\cdot\partial_\tau\varphi_\eps
= & \int_s^t\int_{\rline^{3}}\ue \cdot \left(\chi_\eps\m\prt_\tau\varphi+
\varphi\m\nabla\chi_\eps-\prt_\tau g_\eps(\tau, x)-\nabla 
g_\eps\dot h(\tau)\right).
\end{aligned}
\end{equation}
We conclude from the proof of Proposition \ref{Prop_test-func} that 
$$\lVert\prt_\tau  g_\eps\rVert_{L^2(\rline^3)}
\leq C\eps^{\frac{3}{2}}\lVert\prt_\tau \varphi\rVert_{L^\infty (\rline^3)}. $$
As $\varphi$ does not depend on time, thereby we observe that the first term and the third term on the right side of \rfb{timevarin} disappear. Thus, we need to bound the remaining two terms. Then \rfb{timevarin} becomes 
\begin{equation}\label{relation_time_var}
\begin{aligned}
\int_s^t\int_{\rline^{3}}\ue\cdot\partial_\tau\varphi_\eps
= \int_s^t\int_{\rline^{3}}\ue\cdot
(\varphi\nabla\chi_\eps) - \int_s^t\int_{\rline^{3}}\ue \cdot
\left( \nabla 
g_\eps\dot h(\tau)\right). 
\end{aligned}
\end{equation}
Recalling the property of the cut-off function $\chi_\eps$ in Lemma \ref{lemmachi}, we derive that
\begin{equation*}
\begin{aligned}
\left|\int_s^t\int_{\rline^{3}}\ue\cdot
(\varphi\m\nabla\chi_\eps) \right|
& \leq \int_s^t \norm{\ue}_{L^2} \norm{\varphi}_{L^\infty} \norm{\nabla\chi_\eps}_{L^2} \\
& \leq C \eps^{\frac{1}{2}} (t-s) \norm{\varphi}_{H^2}  \norm{\ue}_{L^\infty(0, T; L^2)}
\\
& \leq \tilde C (t-s) \m\norm{\varphi}_{H^2}.
\end{aligned}
\end{equation*}
For the second term in the right-hand side of \eqref{relation_time_var}, we bound
\begin{equation*}
\begin{aligned}
\left|\int_s^t\int_{\rline^{3}}\ue \cdot 
\left(\nabla  g_\eps(\tau,y)\dot h(\tau)\right) \right|
& \leq \int_s^t \norm{\ue}_{L^2} \norm{\dot h(\tau) \nabla g_\eps}_{L^2} \\
& \leq C \eps^{\frac{1}{2}} \lVert\varphi\rVert_{H^2} \int_s^t \norm{\ue}_{L^2}  \m |\dot h(\tau)|
\\
& \leq C \eps^{\frac{1}{2}}  (t-s)^{1/q'} \norm{\varphi}_{H^2} \norm{h}_{W^{1,q}} \norm{\ue}_{L^\infty(0, T; L^2)} \\
& \leq \tilde C (t-s)^{1/q'} \norm{\varphi}_{H^2} \quad\quad 4<q'<\infty,
\end{aligned}
\end{equation*}
where we used the estimates of $g_\eps$ in  \rfb{ineqvarphi}. 
Gathering the two estimates above, we get that 
\begin{equation*}
\left|\int_s^t\int_{\rline^{3}}\ue\cdot\partial_\tau\varphi_\eps \right|
\leq \tilde C (t-s)^{1/q'} \norm{\varphi}_{H^2},
\end{equation*}
with $4<q'<\infty$.
Consequently, combining all the estimates above, we have, for some constant $C$ depending on $\nu$,
\begin{equation*}
\begin{aligned}
\left|\langle U_\eps(t)-U_\eps(s),\varphi\rangle \right|
& \leq  \left( C \nu (t-s)^{1/2} + C (t-s)^{1/4}+ \tilde C (t-s)^{1/q'} \right) \norm{\varphi}_{H^2}\\
& \leq C (t-s)^{1/q'} \norm{\varphi}_{H^2}.
\end{aligned}
\end{equation*}
It implies that $(\mathbb{P} U_\eps)$ is equi-continuous in time with value in $H_\sigma^{-2}$. Since $(\mathbb{P}  U_\eps)$ is uniformly bounded in $H^{-2}_{\sigma}$ (see \eqref{boundpsiep}), according to the Arzel\`a-Ascoli theorem and compact embedding $H^{-2} \hookrightarrow H^{-3}\loc $, we are able to extract a sub-sequence $( \mathbb{P} U_{\eps_k})$ of $(\mathbb{P}  U_\eps)$ which converges strongly in $ C^0(0,T;H^{-3}\loc)$. We denote the limit of $(\mathbb{P} U_{\eps_k})$ by $ u$, i.e.
\begin{equation}\label{limee}
\mathbb{P} U_{\eps_k}\longrightarrow  u \quad \text{strongly in}\quad  L^\infty(0, T; H^{-3}\loc(\rline^3)).
\end{equation}

{\color{black}
As $\varphi$ is divergence free, we have $\mathbb{P} \varphi = \varphi$. It then follows from the property of the Leray projector that
$$
\langle \mathbb{P} U_{\eps_k} - u_{\eps_k} , \varphi \rangle = \langle U_{\eps_k} - u_{\eps_k} , \varphi \rangle.
$$
Recalling the definition of $U_{\eps_k}$ and the test function $\varphi_{\eps_k}$, we can bound
\begin{equation*}
\begin{aligned}
\left|\langle \mathbb{P} U_{\eps_k} - u_{\eps_k} , \varphi \rangle \right|
& = \left|\int_{\rline^{3}}u_{\eps_k}(t,x)\cdot \left(\varphi_{\eps_k}(t,x) - \varphi (x)\right)\,\dd x \right| \\
& \leq \norm{u_{\eps_k}}_{L^2} \norm{\varphi_{\eps_k} - \varphi}_{L^2} \\
& \leq \norm{u_{\eps_k}}_{L^2} \norm{\varphi \left(\chi_{\eps_k} -1 \right)
-g_{\eps_k}}_{L^2} \\
& \leq \norm{u_{\eps_k}}_{L^2} \norm{\varphi}_{L^\infty}\norm{\chi_{\eps_k} -1}_{L^2}
+ \norm{u_{\eps_k}}_{L^2} \norm{g_{\eps_k}}_{L^2}  \\
& \leq C {\eps_k}^{3/2} \norm{u_{\eps_k}}_{L^2} \norm{\varphi}_{L^\infty} \\
& \leq C {\eps_k}^{3/2} \norm{u_{\eps_k}}_{L^2} \norm{\varphi}_{H^2},
\end{aligned}
\end{equation*}
where we used Lemma \ref{lemmachi} and \rfb{ineqvarphi}. By the density of $C^\infty_{0, \sigma}$ in $H^2_\sigma$, it gives that 
$$
\norm{\mathbb{P} U_{\eps_k} - u_{\eps_k}}_{H^{-2}} \leq C {\eps_k}^{3/2} \norm{u_{\eps_k}}_{L^2}.
$$
As $u_{\eps_k}$ is bounded in $L^\infty(0, T; L^2)$, we get that 
$$
\norm{\mathbb{P} U_{\eps_k} - u_{\eps_k}}_{L^\infty(0, T; H^{-2})} \leq C {\eps_k}^{3/2} \norm{u_{\eps_k}}_{L^\infty(0, T; L^2)} \toep 0.
$$
This, together with \rfb{limee}, implies that
$$
u_{\eps_k} \toepk  u \quad \text{in} \quad L^\infty(0, T; H^{-3}\loc(\rline^3)).
$$
Finally, by using the interpolation inequality:
$$\lVert u_{\eps_k}\rVert_{L^{\frac{8}{3}}(0, T; L_{\rm loc}^2(\rline^3))}\leq C \lVert u_{\eps_k}\rVert_{L^\infty(0, T; H_{\rm loc}^{-3}(\rline^3))}^{\frac{1}{4}}\lVert u_{\eps_k}\rVert_{L^2(0, T; H_{\rm loc}^1(\rline^3))}^{\frac{3}{4}}, $$
we conclude that 
\begin{equation*}
u_{\eps_k} \longrightarrow  u\quad\text{strongly in }  L^{2}(0,T;L^2\loc(\rline^3)).
\end{equation*}
This completes the proof of Proposition \ref{prop_conv}.}
\end{proof}

%%%%%%%%%%%%%%%%%%+++++++++++++++++++++%%%%%%%%%%%%%%%%%%%%%+++++++++++
\section{Proof of the main Theorem}\label{section-limit}
Combined with the modified test function constructed in Proposition \ref{Prop_test-func} and the strong convergence in Proposition \ref{prop_conv}, now we are able to prove our main result, i.e. Theorem \ref{main-thm}. 
\begin{proof}[Proof of Theorem \ref{main-thm}]
We aim to prove that the limit $u$ obtained in Proposition \ref{prop_conv} is a solution of the Navier-Stokes equations in $\rline^3$ with initial data $u_0(x)$. For every fixed and finite $T > 0$, we need to show that the limit $u$ verifies 
\begin{equation*}
-\int_{0}^{T} \int_{\rline^3} u \cdot \left(\prt_s
\m\varphi  + (u \cdot \nabla) \m\varphi
\right) \, \dx \m\ds
+ \nu \int_{0}^{T} \int_{\rline^{3}} \nabla u : \nabla \varphi  
\,  \dx \m\ds 
= \int_{\rline^{3}}\m u^0(x) \cdot \varphi(0, x) 
\,  \dx,
\end{equation*}
for every $\varphi\in C_c^1([0,T); H_\sigma^1(\rline^3)).$

Based on our assumptions in Theorem \ref{main-thm}, we know that 
\begin{align*}
\ue \; \text{is bounded in} \; L^{\infty}\left(0,T; L^{2}(\rline^3)\right) \cap L^{2}\left(0,T; H^{1}(\rline^3)\right).
\end{align*}
According to weak compactness, for the limit $u$ in Proposition \ref{prop_conv}, there exists sub-sequence $(u_{\eps_k})$ of $(\ue)$ such that
\begin{gather}
u_{\eps_k} \rightharpoonup u \quad\text{weak-$\ast$ in } L^\infty(0,T;L^2(\rline^3))\notag,\\%\label{26}\\
u_{\eps_k} \rightharpoonup u \quad\text{weakly in } L^2(0,T;H^1(\rline^3)) \label{weakcon}.
\end{gather}
The above subsequence $(u_{\eps_k})$ can be chosen by diagonalization method. 

Let $\varphi \in C^\infty_c ([0, T)\times \rline^3)$ with $\div \varphi = 0$. For small $\eps>0$, we can take the family $(\varphi_\eps)_{\eps>0}$ obtained in Proposition \ref{Prop_test-func} as the test functions of the fluid-solid system (see Remark \ref{remark_varphi}). According to Definition \ref{defu}, we have
\begin{multline}\label{ineq_conve}
-\int_{0}^{T} \int_{\rline^3} \ue \cdot \prt_s
\m\varphi_{\eps} \, \dx \m\ds -\int_{0}^{T} \int_{\rline^3} \ue \cdot (\ue \cdot \nabla) \m\varphi_{\eps} \, \dx \m\ds \\
+ 2 \nu \int_{0}^{T} \int_{\rline^{3}} D (\ue) : D (\varphi_{\eps})  
\,  \dx \m\ds 
= \int_{\rline^{3}} \m \ue^0(x) \cdot \varphi_{\eps}(0, x) 
\,  \dx. 
\end{multline}
We pass to the limit $\eps \to 0$ for the four terms above.\\
Firstly, recalling the convergence results \rfb{timeconv} in Proposition \ref{Prop_test-func} and the continuous embedding $W^{1, q}[0, T)\hookrightarrow C^0[0, T)$, we have 
\begin{align*}
\varphi_{\eps} (0, x) \longrightarrow \varphi(0) \;\text{strongly in}\; L^{2}(\rline^{3}).
\end{align*}
Together with the assumption that $\ue^0(x)$ converges weakly to $u^0(x)$ in $L^2(\rline^3)$, we derive that
\begin{align}{\label{converge1}}
\int_{\rline^{3}} u_{\eps}^0(x) \cdot \varphi_{\eps}(0, x) \,  \dx\longrightarrow \int_{\rline^{3}} u^0(x) \cdot \varphi(0, x) \,  \dx.
\end{align}
Putting together the weak convergence in \eqref{weakcon} and the strong convergence of $\varphi_{\eps}$ in \rfb{convergphi},
we immediately obtain that
\begin{align*}%{\label{converge2}}
\int_{0}^{T}  \int_{\rline^{3}} D (u_{\eps}) :  D(\varphi_{\eps}) \m \dx \m\ds  \longrightarrow \int_{0}^{T}  \int_{\rline^{3}} D(u) : D( \varphi) \m \dx \m\ds .
\end{align*}
Moreover, we have 
$$
 \int_{\rline^{3}} D(u) : D( \varphi) \m \dx = \frac{1}{2} \int_{\rline^{3}} \nabla u :  \nabla \varphi \m \dx + \frac{1}{2} \int_{\rline^{3}} \div u \, \div \varphi \m \dx =  \frac{1}{2} \int_{\rline^{3}} \nabla u : \nabla \varphi \m \dx.
$$
Thus, we further have
\begin{align}{\label{converge2}}
2 \nu \int_{0}^{T}  \int_{\rline^{3}} D (u_{\eps}) :  D(\varphi_{\eps}) \m \dx \m\ds  \longrightarrow \nu \int_{0}^{T}  \int_{\rline^{3}} \nabla u : \nabla \varphi \m \dx \m\ds .
\end{align}
For the term with time derivative in \eqref{ineq_conve},
we decompose it as follows:
\begin{equation*}
\begin{aligned}
\int_{0}^{T} \int_{\rline^3} u_{\eps_k} \cdot 
\prt_s\m \varphi_{{\eps}_k} \, \dx \m\ds 
=& \int_{0}^{T} \int_{\rline^3} u_{{\eps}_k} \cdot (\prt_s\m \varphi_{{\eps}_k} - \prt_s\m \varphi)\, \dx \m\ds \\
&+ \int_{0}^{T} \int_{\rline^3} (u_{\eps_k} - u ) \cdot \prt_s\m \varphi \, \dx \m\ds 
+ \int_{0}^{T} \int_{\rline^3} u \cdot \prt_s\m \varphi \, \dx \m\ds\\
\end{aligned}
\end{equation*}
The strong convergence of $\prt_t \varphi_{{\eps}_k}$ in \rfb{timeconv}, together with the boundedness of the subsequence $u_{\eps_k}$ in $L^\infty (0, T; L^2)$, implies that
\begin{equation*}
\begin{aligned}
\int_{0}^{T} \int_{\rline^3} u_{{\eps}_k} \cdot (\prt_s\m \varphi_{{\eps}_k} - \prt_s\m \varphi)\, \dx \m\ds 
& \leq  \int_{0}^{T} \norm{u_{\eps_k}}_{L^2} \norm{\prt_s\m \varphi_{{\eps}_k} - \prt_s\m \varphi}_{L^2} \m \ds \\
& \leq  T^{\frac{1}{q'}} \norm{u_{\eps_k}}_{L^\infty (0, T; L^2)} \norm{\prt_s\m \varphi_{{\eps}_k} - \prt_s\m \varphi}_{L^q (0, T; L^2)}\\
& \toepk 0,
\end{aligned}
\end{equation*}
where $q'$ is the conjugate of $q$. \\
Note that $\varphi \in C^\infty_c ([0, T)\times \rline^3)$, according to the strong convergence of $u_{\eps_k}$ in $L^2(0,T;L^2\loc)$ in Proposition \ref{prop_conv}, we obtain that
\begin{equation*}
\begin{aligned}
\int_{0}^{T} \int_{\rline^3} (u_{\eps_k} - u ) \cdot \prt_s\m \varphi \, \dx \m\ds 
& \leq \int_{0}^{T} \norm{u_{\eps_k} - u}_{L^2\loc} \norm{\prt_s\m \varphi}_{L^2} \m \ds \\
& \leq  \norm{u_{\eps_k} - u}_{L^2 (0, T; L^2\loc)} \norm{\prt_s\m \varphi}_{L^2 (0, T; L^2)}\\
& \toepk 0.
\end{aligned}
\end{equation*}
Gathering the estimates above, we get that 
\begin{equation}\label{convergence3}
\begin{aligned}
\int_{0}^{T} \int_{\rline^3} u_{\eps_k} \cdot 
\prt_s\m \varphi_{{\eps}_k} \, \dx \m\ds 
\longrightarrow \int_{0}^{T} \int_{\rline^3} u \cdot \prt_s\m \varphi \, \dx\m\ds.\\
\end{aligned}
\end{equation}

Now it remains to treat the nonlinear term in \eqref{ineq_conve}. That is to prove the convergence
\begin{equation}\label{convergence4}
\begin{aligned}
\int_{0}^{T} \int_{\rline^3} (u_{\eps_k} \otimes u_{\eps_k}) :
\nabla \varphi_{{\eps_k}} \,\dx \m\ds 
\toepk \int_{0}^{T} \int_{\rline^3} (u \otimes u) : \nabla \varphi \, \dx\m\ds.\\
\end{aligned}
\end{equation}
To do this, we decompose it as 
\begin{equation*}
\begin{aligned}
\int_{0}^{T} \int_{\rline^3} (u_{\eps_k} \otimes u_{\eps_k}) :
\nabla \varphi_{{\eps_k}} \,\dx \m\ds 
=& \int_{0}^{T} \int_{\rline^3} (u_{\eps_k} \otimes u_{\eps_k}) : (\nabla\varphi_{{\eps_k}} - \nabla \varphi)\, \dx \m\ds\\
& + \int_{0}^{T} \int_{\rline^3} (u_{\eps_k} \otimes u_{\eps_k} - u\otimes u) : \nabla\varphi \, \dx \m\ds \\
& + \int_{0}^{T} \int_{\rline^3} (u \otimes u) : \nabla \varphi \, \dx\m\ds.
\end{aligned}
\end{equation*}
To treat the first term on the right-hand side, we use H\"older's inequality and the Gagliardo–Nirenberg interpolation inequality $\norm{f}_{L^4 (\rline^3)} \leq C \norm{\nabla f}_{L^2 (\rline^3)}^{\frac{3}{4}} \norm{f}_{L^2 (\rline^3)}^{\frac{1}{4}}$ to derive that 
\begin{equation*}
\begin{aligned}
& \int_{0}^{T} \int_{\rline^3} (u_{\eps_k} \otimes u_{\eps_k}) : (\nabla\varphi_{{\eps_k}} - \nabla \varphi)\, \dx \m\ds \\
\leq & \int_{0}^{T} \norm{u_{\eps_k}}_{L^4}^2 \norm{\nabla\varphi_{{\eps_k}} - \nabla \varphi}_{L^2}\\
\leq & \int_{0}^{T} \norm{\nabla u_{\eps_k}}_{L^2}^{\frac{3}{2}} \norm{ u_{\eps_k}}_{L^2}^{\frac{1}{2}} \norm{\nabla\varphi_{{\eps_k}} - \nabla \varphi}_{L^2}\\
\leq & \; T^{\frac{1}{4}}\norm{u_{\eps_k}}_{L^2 (0, T; H^1)}^{\frac{3}{2}} \norm{ u_{\eps_k}}_{L^\infty(0, T; L^2)}^{\frac{1}{2}} \norm{\varphi_{{\eps_k}} - \varphi}_{L^\infty(0,T; H^1)} 
\toepk\m 0,
\end{aligned}
\end{equation*}
where we used the strong convergence of $\varphi_{{\eps_k}}$ in $L^\infty(0,T; H^1)$ (see \eqref{convergphi} in Proposition \ref{Prop_test-func}) and the boundedness of $u_{\eps_k}$ in $ L^{\infty}\left(0,T; L^{2}\right) \cap L^{2}\left(0,T; H^{1}\right)$. \\
For the second term on the right-hand side, we write it as 
\begin{equation}\label{prolast}
\begin{aligned}
&\int_{0}^{T} \int_{\rline^3} (u_{\eps_k} \otimes u_{\eps_k} - u\otimes u) : \nabla\varphi \, \dx \m\ds \\
&=  \int_{0}^{T} \int_{\rline^3} (u_{\eps_k} - u )\otimes u_{\eps_k}  : \nabla\varphi \, \dx \m\ds 
 + \int_{0}^{T} \int_{\rline^3} u \otimes (u_{\eps_k} - u) : \nabla\varphi \, \dx \m\ds.
 \end{aligned}
\end{equation}
Recalling that $\varphi$ is compactly supported, by H\"older's inequality and the strong convergence of $u_{\eps_k}$ in $L^2(0,T;L^2\loc)$, we have
\begin{equation*}
\begin{aligned}
& \int_{0}^{T} \int_{\rline^3} (u_{\eps_k} - u)\otimes u_{\eps_k} : \nabla\varphi \, \dx \m\ds \\
\leq & \int_{0}^{T} \norm{u_{\eps_k} - u}_{L^2\loc} \norm{u_{\eps_k}}_{L^2}  \norm{\nabla\varphi}_{L^\infty} \\
\leq & \; T^{\frac{1}{2}} \norm{u_{\eps_k} - u}_{L^2 (0, T; L^2\loc)} \norm{u_{\eps_k}}_{L^\infty(0, T; L^2)}  \norm{\nabla\varphi}_{L^\infty ((0, T) \times \rline^3)} \toepk 0.
\end{aligned}
\end{equation*}
Doing a similar manner for the last term in \rfb{prolast}, the convergence \eqref{convergence4} holds by combining the above estimates. Gathering the convergence \eqref{converge1}-\eqref{convergence4}, we conclude that the limit $u$ verifies \eqref{energy-eq-u} and it is a Leray weak solution of the Navier-Stokes equations.

This ends the proof of Theorem \ref{main-thm}.
\end{proof}

%%%%%%%%%%%%%%%%%%%%%%%%%%%%%%%++++++++++++++++++%%%%%%%%%%%%%%%%%%%%
%\section{Conclusion and final comments}

%%%%%%%%%%++++++++++%%%%%%%%%%++++++++++%%%%%%%%%%++++++++++%%%%%%%%%%+++
\section*{Statements and declarations}
The authors declare that they have no known competing financial interests or personal relationships that could have appeared to influence the work reported in this paper. \\

Availability of data and material : Not applicable.

%\bibliography{mytexbib}

\begin{thebibliography}{10}
	
	\bibitem{bravin2020vanishing}
	{\sc M.~Bravin and {\v{S}}.~Ne{\v{c}}asov{\'a}}, {\em On the vanishing rigid
		body problem in a viscous compressible fluid}, Journal of Differential
	Equations, 345 (2023), pp.~45--77.
	
	\bibitem{bravin2022velocity}
	\leavevmode\vrule height 2pt depth -1.6pt width 23pt, {\em On the velocity of a
		small rigid body in a viscous incompressible fluid in dimension two and
		three}, Journal of Dynamics and Differential Equations,  (2023), pp.~1--16.
	
	\bibitem{chipot2020limits}
	{\sc M.~Chipot, J.~Droniou, G.~Planas, J.~C. Robinson, and W.~Xue}, {\em Limits
		of the {S}tokes and {N}avier--{S}tokes equations in a punctured periodic
		domain}, Analysis and Applications, 18 (2020), pp.~211--235.
	
	\bibitem{conca2000existence}
	{\sc C.~Conca, H.~Jorge San~Martin, and M.~Tucsnak}, {\em Existence of
		solutions for the equations modelling the motion of a rigid body in a viscous
		fluid}, Communications in Partial Differential Equations, 25 (2000),
	pp.~1019--1041.
	
	\bibitem{desjardins2000weak}
	{\sc B.~Desjardins and M.~Esteban}, {\em On weak solutions for fluid-rigid
		structure interaction: Compressible and incompressible models},
	Communications in Partial Differential Equations, 25 (2000), pp.~263--285.
	
	\bibitem{ervedoza2014long}
	{\sc S.~Ervedoza, M.~Hillairet, and C.~Lacave}, {\em Long-time behavior for the
		two-dimensional motion of a disk in a viscous fluid}, Communications in
	Mathematical Physics, 329 (2014), pp.~325--382.
	
	\bibitem{ervedoza2022large}
	{\sc S.~Ervedoza, D.~Maity, and M.~Tucsnak}, {\em Large time behaviour for the
		motion of a solid in a viscous incompressible fluid}, Mathematische Annalen,
	(2022), pp.~1--61.
	
	\bibitem{1972FJR}
	{\sc E.~B. Fabes, B.~F. Jones, and N.~M. Rivi\`ere}, {\em The initial value
		problem for the {N}avier-{S}tokes equations with data in {$L^{p}$}}, Archive
	for Rational Mechanics and Analysis, 45 (1972), pp.~222--240.
	
	\bibitem{feireisl2022motion3}
	{\sc E.~Feireisl, A.~Roy, and A.~Zarnescu}, {\em On the motion of a large
		number of small rigid bodies in a viscous incompressible fluid}, Journal de
	Math{\'e}matiques Pures et Appliqu{\'e}es, 175 (2023), pp.~216--236.
	
	\bibitem{feireisl2022motion1}
	\leavevmode\vrule height 2pt depth -1.6pt width 23pt, {\em On the motion of a
		nearly incompressible viscous fluid containing a small rigid body}, Journal
	of Nonlinear Science, 33 (2023), p.~94.
	
	\bibitem{feireisl2022motion2}
	\leavevmode\vrule height 2pt depth -1.6pt width 23pt, {\em On the motion of a
		small rigid body in a viscous compressible fluid}, Communications in Partial
	Differential Equations, 48 (2023), pp.~794--818.
	
	\bibitem{ferriere2022unbounded}
	{\sc G.~Ferriere and M.~Hillairet}, {\em Unbounded-energy solutions to the
		fluid+ disk system and long-time behavior for large initial data}, Comptes
	Rendus. Math{\'e}matique, 361 (2023), pp.~453--485.
	
	\bibitem{gunzburger2000global}
	{\sc M.~D. Gunzburger, H.-C. Lee, and G.~A. Seregin}, {\em Global existence of
		weak solutions for viscous incompressible flows around a moving rigid body in
		three dimensions}, Journal of Mathematical Fluid Mechanics, 2 (2000),
	pp.~219--266.
	
	\bibitem{he2019small}
	{\sc J.~He and D.~Iftimie}, {\em A small solid body with large density in a
		planar fluid is negligible}, Journal of Dynamics and Differential Equations,
	31 (2019), pp.~1671--1688.
	
	\bibitem{he2021small}
	\leavevmode\vrule height 2pt depth -1.6pt width 23pt, {\em On the small rigid
		body limit in 3{D} incompressible flows}, Journal of the London Mathematical
	Society, 104 (2021), pp.~668--687.
	
	\bibitem{iftimie2006two}
	{\sc D.~Iftimie, M.~C.~L. Filho, and H.~J.~N. Lopes}, {\em Two-dimensional
		incompressible viscous flow around a small obstacle}, Mathematische Annalen,
	336 (2006), pp.~449--489.
	
	\bibitem{iftimie2009remarks}
	{\sc D.~Iftimie and J.~Kelliher}, {\em Remarks on the vanishing obstacle limit
		for a 3{D} viscous incompressible fluid}, Proceedings of the American
	Mathematical Society, 137 (2009), pp.~685--694.
	
	\bibitem{lacave2009two}
	{\sc C.~Lacave}, {\em Two-dimensional incompressible viscous flow around a thin
		obstacle tending to a curve}, Proceedings of the Royal Society of Edinburgh:
	Section A Mathematics, 139 (2009), pp.~1237--1254.
	
	\bibitem{lacave2015three}
	{\sc C.~Lacave}, {\em 3{D} viscous incompressible fluid around one thin
		obstacle}, Proceedings of the American Mathematical Society, 143 (2015),
	pp.~2175--2191.
	
	\bibitem{lacave2017small}
	{\sc C.~Lacave and T.~Takahashi}, {\em Small moving rigid body into a viscous
		incompressible fluid}, Archive for Rational Mechanics and Analysis, 223
	(2017), pp.~1307--1335.
	
	\bibitem{gilles2018navier}
	{\sc P.-G. Lemari{\'e}-Rieusset}, {\em The Navier-Stokes Problem in the 21st
		Century}, Chapman and Hall/CRC, 2018.
	
	\bibitem{galdi2011introduction}
	{\sc G.~P.~Galdi}, {\em An introduction to the mathematical theory of the
		Navier-Stokes equations: Steady-state problems}, Springer Science \& Business
	Media, 2011.
	
	\bibitem{serre1987chute}
	{\sc D.~Serre}, {\em Chute libre d’un solide dans un fluide visqueux
		incompressible. existence}, Japan Journal of Industrial and Applied
	Mathematics, 4 (1987), pp.~99--110.
	
	\bibitem{takahashi2004global}
	{\sc T.~Takahashi and M.~Tucsnak}, {\em Global strong solutions for the
		two-dimensional motion of an infinite cylinder in a viscous fluid}, Journal
	of Mathematical Fluid Mechanics, 6 (2004), pp.~53--77.
	
	\bibitem{temam2018mathematical}
	{\sc R.~Temam}, {\em Mathematical problems in plasticity}, Courier Dover
	Publications, 2018.
	
	\bibitem{wang2011analyticity}
	{\sc Y.~Wang and Z.~Xin}, {\em Analyticity of the semigroup associated with the
		fluid--rigid body problem and local existence of strong solutions}, Journal
	of Functional Analysis, 261 (2011), pp.~2587--2616.
	
\end{thebibliography}
%\bibliographystyle{siam}

\end{document}